\documentclass[12pt,oneside]{amsart}

\usepackage{amssymb,latexsym}

\usepackage{hyperref}
\hypersetup{
  colorlinks   = true, 
  urlcolor     = blue, 
  linkcolor    = blue, 
  citecolor   = red 
}
\usepackage{anysize}

\usepackage{pgfplots}
\pgfplotsset{compat=1.15}
\usepackage{mathrsfs}
\usetikzlibrary{arrows}
\usetikzlibrary{calc}
\usetikzlibrary{intersections}
\usetikzlibrary{through}

\usepackage{mathtools}


\newtheorem{theorem}{Theorem}[section]

\newtheorem{lemma}[theorem]{Lemma}
\newtheorem{claim}[theorem]{Claim}
\newtheorem{conjecture}[theorem]{Conjecture}
\newtheorem{corollary}[theorem]{Corollary}

\theoremstyle{definition}
\newtheorem{definition}[theorem]{Definition}

\theoremstyle{remark}
\newtheorem{remark}[theorem]{Remark}

\numberwithin{equation}{section}

\newcommand{\ZQ}{Z(Q_k)}
\newcommand{\Zg}{Z(g_{n,k})}
\newcommand{\MQ}{M(Q_k)}
\newcommand{\spcap}[1]{C_{#1}}

\renewcommand{\epsilon}{\varepsilon}
\renewcommand{\phi}{\varphi}
\renewcommand{\kappa}{\varkappa}

\begin{document}

\title{Covering by planks and avoiding zeros of polynomials}

\author{Alexey Glazyrin{$^\spadesuit$}}
\author{Roman Karasev{$^\clubsuit$}}
\author{Alexandr Polyanskii{$^\diamondsuit$}}

\thanks{{$^\spadesuit$} Partially supported by the NSF grant DMS-2054536}
\thanks{{$^\diamondsuit$} Supported by the Young Russian Mathematics award and the program ``Leading Scientific Schools'' through Grant No. NSh-775.2022.1.1.} 

\address{Alexey Glazyrin, School of Mathematical \& Statistical Sciences, The University of Texas Rio Grande Valley, Brownsville,
TX 78520, USA}
\email{alexey.glazyrin@utrgv.edu}

\address{Roman Karasev, Institute for Information Transmission Problems RAS, Bolshoy Karetny per. 19, Moscow, Russia 127994}
\email{r\_n\_karasev@mail.ru}
\urladdr{http://www.rkarasev.ru/en/}

\address{Alexandr Polyanskii, Moscow Institute of Physics and Technology, Institutskiy per. 9, Dolgoprudny, Russia 141700}
\email{alexander.polyanskii@yandex.ru}

\subjclass[2010]{52C17, 52C35, 51M16, 32A08, 90C23}
\keywords{Bang's problem, covering by planks, zeros of polynomials, Chebyshev polynomials}

\begin{abstract}
We note that the recent polynomial proofs of the spherical and complex plank covering problems by Zhao and Ortega-Moreno give some general information on zeros of real and complex polynomials restricted to the unit sphere. As a corollary of these results, we establish several generalizations of the celebrated Bang plank covering theorem. 

We prove a tight polynomial analog of the Bang theorem for the Euclidean ball and an even stronger polynomial version for the complex projective space. Specifically, for the ball we show that for every real nonzero $d$-variate polynomial $P$ of degree $n$, there exists a point in the unit $d$-dimensional ball at distance at least $1/n$ from the zero set of the polynomial $P$.

Using the polynomial approach, we also prove the strengthening of the Fejes T\'oth zone conjecture on covering a sphere by spherical segments, closed parts of the sphere between two parallel hyperplanes. In particular, we show that the sum of angular widths of spherical segments covering the whole sphere is at least $\pi$.
\end{abstract}

\maketitle

\section{Introduction}

Bang's plank covering theorem \cite{bang1951} in its particular case\footnote{In this paper we only consider Bang's theorem for covering a Euclidean ball and do not discuss its general version for covering a convex body of given width.} asserts that \emph{if a Euclidean ball of arbitrary dimension is covered by planks then the sum of the widths of the planks is greater or equal to the diameter of the ball.} Here a \emph{plank of width $2\delta$} is a Euclidean metric $\delta$-neighborhood of an affine hyperplane in $\mathbb R^d$. When the planks have equal widths, the Bang theorem may be restated as follows: \emph{For any collection of $n$ hyperplanes in the Euclidean space, there is a point in the unit ball at distance at least $1/n$ from the union of the hyperplanes.}

There are spherical \cite{polyajiang2017} and complex \cite{ball2001} versions of the plank covering theorem, with similar statements in the case of equal planks. The equal plank cases of those versions were recently proved in \cite{zhao2021,ortegamoreno2021} by maximizing the absolute value of the polynomial, whose zero set is the union of hyperplanes (the paper \cite{ortegamoreno2021} also addresses the case of unequal complex planks that we extend in Theorem \ref{theorem:ortega-moreno2} below). 

We start with two statements whose proofs essentially follow from \cite{zhao2021,ortegamoreno2021}.

\begin{theorem}
\label{theorem:zhao-inequality}
If a polynomial $P\in \mathbb R[x_1, \ldots, x_d]$ of degree $n$ has a nonzero restriction to the unit sphere $S^{d-1}\subset\mathbb R^d$ and attains its maximal absolute value on $S^{d-1}$ at a point $p$ then $p$ is at angular distance at least $\frac{\pi}{2n}$ from the intersection of the zero set of $P$ with~$S^{d-1}$.
\end{theorem}

\begin{theorem}
\label{theorem:ortega-moreno}
If a homogeneous polynomial $P\in \mathbb C[z_1, \ldots, z_d]$ of degree $n$ is not identically zero and attains its maximal absolute value on the unit sphere $S^{2d-1}\subset\mathbb C^d$ at a point $p$ then $p$ is at angular distance at least $\arcsin\frac{1}{\sqrt{n}}$ from the intersection of the zero set of~$P$ with $S^{2d-1}$. 
\end{theorem}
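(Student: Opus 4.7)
The plan is to reduce the statement to a sharp polynomial inequality in one complex variable.

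First, I restrict $P$ to $V := \operatorname{span}_{\mathbb{C}}(p, q)$, a complex $2$-plane. The restriction is still a homogeneous polynomial of degree $n$, still attains its maximum at $p$ on $V \cap S^{2d-1} \cong S^3$, and the Hermitian angle between $p$ and $q$ is unchanged; so I may assume $d = 2$. Choose an orthonormal basis $(p, \eta)$ of $V$ and set $h(w) := P(p + w\eta) \in \mathbb{C}[w]$, of degree at most $n$. By homogeneity, $|P((p + w\eta)/\sqrt{1+|w|^2})|^2 = |h(w)|^2/(1+|w|^2)^n$, and since $(p + w\eta)/\sqrt{1+|w|^2}$ exhausts $S^3$ as $w$ varies over $\mathbb{C}$ (up to the hyperplane $a = 0$), the maximum condition becomes
\[
|h(w)|^2 \leq |h(0)|^2 (1+|w|^2)^n \qquad \text{for every } w \in \mathbb{C}.
\]
After rotating the phase of $q$ and that of $\eta$, I may place the preimage of $q$ at a positive real $w_0 > 0$ with $w_0 = \tan\theta$ and $h(w_0) = 0$, where $\theta$ is the angular distance. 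The desired bound $\sin\theta \geq 1/\sqrt{n}$ is then equivalent to $w_0^2 \geq 1/(n-1)$.

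Normalizing $h(0) = 1$, the theorem reduces to the following sharp estimate: \emph{if $h \in \mathbb{C}[w]$ has degree at most $n$, satisfies $h(0) = 1$ and $|h(w)|^2 \leq (1+|w|^2)^n$ for every $w \in \mathbb{C}$, and vanishes at some $w_0$, then $|w_0|^2 \geq 1/(n-1)$}. As a preliminary step, the linear-in-$w$ part of $(1+|w|^2)^n - |h(w)|^2$ near $0$ is $-2\operatorname{Re}(\overline{h(0)}\,h'(0)\,w)$, which must vanish for every $w$ and therefore forces $h'(0) = 0$.

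The main obstacle is establishing the sharp constant $1/(n-1)$. My approach is to use the Fubini--Study identity
\[
|w - w_0|^2 + |1 + \overline{w_0}\,w|^2 = (1+|w|^2)(1+|w_0|^2),
\]
raised to the $n$-th power:
\[
(1+|w|^2)^n (1+|w_0|^2)^n = \sum_{k=0}^n \binom{n}{k}\, |w - w_0|^{2k}\, |1 + \overline{w_0}\,w|^{2(n-k)}.
\]
Factoring $h(w) = (w - w_0)\,\widetilde h(w)$ and applying Cauchy--Schwarz against this orthogonal decomposition -- equivalently, in the reproducing kernel Hilbert space of polynomials with kernel $K(u, v) = (1 + u \bar v)^n$ -- should yield the sharp bound. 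The expected extremal polynomial, for $w_0 > 0$ real with $w_0^2 = 1/(n-1)$, is
\[
h^\ast(w) = \Bigl(1 - \frac{w}{w_0}\Bigr)\Bigl(1 + \frac{w}{(n-1)\,w_0}\Bigr)^{n-1},
\]
which satisfies $h^\ast(0) = 1$, $(h^\ast)'(0) = 0$, $h^\ast(w_0) = 0$, and has an $(n-1)$-fold zero at $-(n-1)w_0$. The delicate point is making the Cauchy--Schwarz tight: a second-order analysis at $w = 0$ yields only $|h''(0)| \leq n$, which alone would naively suggest $w_0^2 \geq 2/n$, but $h^\ast$ actually achieves the smaller value $1/(n-1) < 2/n$ by pairing $|h''(0)| = n$ with coordinated nonzero higher-order coefficients. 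Exploiting the global structure of the Fubini--Study identity is therefore essential.
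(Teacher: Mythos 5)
Your reduction to dimension $2$ and to the one-variable statement is correct and matches the paper's own first step, your first-order condition $h'(0)=0$ is right, and your extremal $h^\ast$ is indeed the true extremizer (it is $z_1z_2^{n-1}$ in the affine chart centered at its maximum). But the proof has a genuine gap exactly where you flag it: the sharp inequality ``$h(0)=1$, $|h(w)|^2\le(1+|w|^2)^n$ for all $w$, $h(w_0)=0$ $\Rightarrow$ $|w_0|^2\ge\frac1{n-1}$'' is the entire content of the theorem, and ``applying Cauchy--Schwarz \ldots should yield the sharp bound'' is not an argument. Worse, there is a structural obstruction to the route you propose. In the reproducing kernel Hilbert space with kernel $K(u,v)=(1+u\bar v)^n$ one has $h(0)=\langle h,K_0\rangle$ with $\|K_0\|=1$, so $\|h\|\le 1$ together with $h(0)=1$ forces $h\equiv 1$ by the equality case of Cauchy--Schwarz; your hypothesis is only the \emph{pointwise} bound $|h(w)|^2\le K(w,w)$, which is strictly weaker than $\|h\|\le 1$ (your own $h^\ast$ has norm $>1$). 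Hence Cauchy--Schwarz in that space cannot be applied to $h$ directly, and a crude $L^2$ substitute (integrating the pointwise bound against the invariant measure and then estimating $0=h(w_0)$ by Cauchy--Schwarz on the coefficients) only gives $|w_0|^2\gtrsim n^{-3/2}$, far from the sharp order $n^{-1}$. Some global input beyond coefficient inequalities is needed.

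The paper closes this gap by a different and much shorter device: it centers the affine coordinate at the \emph{zero} rather than at the maximum, writes $|P|=|zQ(z)|(1+|z|^2)^{-n/2}$ with $Q$ holomorphic, and uses the vanishing of $d\ln|P|$ at the maximum point $a>0$ to get $\partial_x\ln|Q|(a)=\frac{na}{1+a^2}-\frac1a$. If $a^2<\frac1{n-1}$ this is negative, so $|Q|$ increases just inside the circle $\{|z|=a\}$; the maximum principle for the holomorphic $Q$ on the disc $\{|z|\le a\}$ then produces a point of that circle where $|Q|$, and hence $|P|$ (which differs from $|Q|$ by a constant factor on the circle), exceeds its value at $a$ --- a contradiction. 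I recommend you either adopt this maximum-principle step (your setup converts to it by the M\"obius map exchanging $0$ and $w_0$), or supply a complete proof of your one-variable inequality; as written, the central estimate is asserted, not proved.
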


We also need the following description of the equality case from Theorem~\ref{theorem:zhao-inequality}.

\begin{theorem}	
\label{theorem:zhao-equality}
If a polynomial $P\in \mathbb R[x_1, \ldots, x_d]$ of degree $n$ has nonzero restriction to the unit sphere $S^{d-1}\subset\mathbb R^d$ and attains its maximal absolute value on $S^{d-1}$ at a point $p$ and $p$ is precisely at angular distance $\frac{\pi}{2n}$ from the intersection of the zero set of $P$ with $S^{d-1}$ then there exists a circle $\Sigma\subseteq S^{d-1}$ through $p$ centered at the origin such that the $2n$ zeros and the $2n$ maxima of the absolute value of the restriction $P|_\Sigma$ interlace and split the circle $\Sigma$ in $4n$ segments of length $\frac{\pi}{2n}$ each.
\end{theorem}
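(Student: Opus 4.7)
The plan is to reduce to a one-dimensional Chebyshev-type rigidity statement by slicing with an appropriate great circle. By compactness of the zero set of $P$ on $S^{d-1}$, there exists a zero $q$ at angular distance exactly $\pi/(2n)$ from $p$. Since $\pi/(2n) < \pi$, the great circle $C$ through $p$ and $q$ centered at the origin is uniquely determined, and I parameterize it by $\gamma(t) = \cos t \cdot p + \sin t \cdot v$ with $\gamma(\pi/(2n)) = q$. Since $P$ has total degree $n$, the restriction $f(t) := P(\gamma(t))$ is a trigonometric polynomial of degree at most $n$. Writing $M$ for the maximum of $|P|$ on the sphere and replacing $P$ by $-P$ if necessary, we have $|f(t)| \le M$ for all $t$, $f(0) = M$ (hence $f'(0) = 0$ by the unconstrained extremality of $p$ on $S^{d-1}$ together with $P(p) \ne 0$), and $f(\pi/(2n)) = 0$.

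The key step is to show that these constraints force $f(t) = M\cos(nt)$. Setting $g(t) := M\cos(nt)$ and $h := f - g$, the function $h$ is a trigonometric polynomial of degree at most $n$, so unless identically zero it has at most $2n$ zeros on $[0, 2\pi)$ counted with multiplicity. At the equioscillation points $t_k := k\pi/n$, $k = 0, 1, \dots, 2n - 1$, the values $g(t_k) = (-1)^k M$ combined with $|f| \le M$ give $h(t_k) \le 0$ for even $k$ and $h(t_k) \ge 0$ for odd $k$. This sign alternation forces at least one zero of $h$ in each of the $2n - 2$ arcs $(t_k, t_{k+1})$ for $k = 1, 2, \dots, 2n - 2$. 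Moreover, $t_0 = 0$ is a zero of $h$ of multiplicity at least $2$ (from $f(0) = g(0) = M$ and $f'(0) = g'(0) = 0$), and $\pi/(2n)$ is an extra zero lying in the arc $(t_0, t_1)$ (from $f(\pi/(2n)) = g(\pi/(2n)) = 0$). The total zero count is at least $(2n - 2) + 2 + 1 = 2n + 1 > 2n$, forcing $h \equiv 0$.

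The main delicate point is precisely this zero counting: one must verify that the listed zeros are genuinely distinct and handle the degenerate subcase in which $h(t_k) = 0$ for some $k \ne 0$. In that subcase $f$ attains value $\pm M$ at $t_k$, so $f'(t_k) = 0 = g'(t_k)$, and the zero has multiplicity at least $2$, which only pushes the count further above $2n$. Once $f(t) = M\cos(nt)$ is established, the conclusion follows immediately: the $2n$ zeros of $f$ on $C$ lie at $\pi/(2n) + k\pi/n$ and the $2n$ maxima of $|f|$ at $k\pi/n$, so these $4n$ points interlace on $C$ at the equal angular spacing $\pi/(2n)$.
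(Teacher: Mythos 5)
Your proof is correct and follows essentially the same route as the paper: restrict $P$ to the great circle through $p$ and a nearest zero, compare with $M\cos(nt)$, and count zeros of the difference (a trigonometric polynomial of degree at most $n$) to force identity, handling coincidences at the equioscillation points via multiplicity-two zeros exactly as in the paper's Lemma~\ref{lemma:zhao}. The only cosmetic difference is that you feed the zero at $\frac{\pi}{2n}$ directly into the count as the $(2n+1)$-st zero, whereas the paper first establishes the dichotomy ``$Q\equiv 0$ or no zeros of $Q$ on $\left[\frac{-\pi}{n},0\right)\cup\left(0,\frac{\pi}{n}\right]$'' and then reads off the equality case.
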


As for the original version of the problem about the unit Euclidean ball, its maximization variant is harder to formulate. The natural question is: \emph{Let $P\in \mathbb R[x_1, \ldots, x_d]$ be a polynomial of degree $n$. How to find a point $p\in B^d\subset\mathbb R^d$ in the unit ball that is at distance at least $\frac{1}{n}$ from the intersection of the zero set of the polynomial $P$ with the ball $B^d$?}

Note that the maximum point of $|P|$ may not serve as $p$. For example, the Chebyshev polynomial $T_n(x) = \cos (n\arccos x)$ attains its maximal absolute value at the ends of the interval $[-1,1]$, but the distance from this maximum to the closest zero is $1 - \cos\frac{\pi}{2n}$, which is of order $1/n^2$. This is a counterexample to the naive approach for $d=1$. For higher dimensions, one can apply $T_n$ to one of the Euclidean coordinates on $B^d$. 

Hence, the choice of a point avoiding zeros must be trickier. A relatively simple argument allows us to give a partial answer to the question with the right order of magnitude.

\begin{theorem}
\label{theorem:ball-weak2}
There exists an absolute constant $C>0$ with the following property. For every nonzero $P\in \mathbb R[x_1, \ldots, x_d]$ of degree $n$, let the pair of points 
\[
(p,q)\in S^{2d-1}\subset \mathbb R^d\times \mathbb R^d
\]
maximize the absolute value of the product $P(x)P(y)$ over $(x,y)\in S^{2d-1}$. Then at least one of $p\in B^d$ and $q\in B^d$ is at distance at least $\frac{C}{n}$ from the zero set of the polynomial $P$. 
\end{theorem}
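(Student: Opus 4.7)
The plan is to lift the problem to the product sphere and invoke Theorem~\ref{theorem:zhao-inequality}. Consider the polynomial
\[
Q(x,y) := P(x)\,P(y) \in \mathbb R[x_1,\ldots,x_d,y_1,\ldots,y_d],
\]
which has degree $2n$. By hypothesis, $(p,q)\in S^{2d-1}$ is a maximizer of $|Q|$, and its zero set on $S^{2d-1}$ is $\{(x,y)\in S^{2d-1}:P(x)=0\text{ or }P(y)=0\}$. Theorem~\ref{theorem:zhao-inequality} applied to $Q$ therefore yields that the angular distance from $(p,q)$ to this zero set is at least $\tfrac{\pi}{4n}$.

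By symmetry we may assume $|q|^2\ge 1/2$, and under this hypothesis I will show that $p$ itself is at Euclidean distance at least $\tfrac{C}{n}$ from the zero set of $P$ in $\mathbb R^d$. Suppose for contradiction that some zero $p'$ of $P$ satisfies $s:=|p-p'|<\tfrac{C}{n}$. From $|p|^2\le 1/2$ we then obtain $|p'|\le |p|+s<1$ (provided $C$ is small enough), so the point
\[
(p',y')\in S^{2d-1},\qquad y':=\sqrt{1-|p'|^2}\,\frac{q}{|q|},
\]
is well-defined, lies on $S^{2d-1}$, and is a zero of $Q$ because $P(p')=0$.

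The key computation is the upper bound on $\|(p,q)-(p',y')\|^2=s^2+(|q|-\sqrt{1-|p'|^2})^2$. Using
\[
|q|^2-(1-|p'|^2)=|p'|^2-|p|^2=(|p'|-|p|)(|p'|+|p|),
\]
together with $\bigl||p'|-|p|\bigr|\le s$, $|p'|+|p|\le 2$, and $|q|+\sqrt{1-|p'|^2}\ge |q|\ge 1/\sqrt 2$, the factor $|q|-\sqrt{1-|p'|^2}$ is bounded by $2\sqrt 2\,s$, so the Euclidean distance from $(p,q)$ to the exhibited zero of $Q$ is at most $3s$, and the angular distance is likewise $O(s)$. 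Comparing with the lower bound $\tfrac{\pi}{4n}$ from the previous paragraph forces $s\ge c/n$ for an absolute constant $c>0$, contradicting $s<\tfrac{C}{n}$ when $C$ is sufficiently small.

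The main obstacle is the reduction step: when $|q|$ is small, neither $|p'|\le 1$ (needed for the construction of $y'$) nor the bounded denominator $|q|+\sqrt{1-|p'|^2}$ in the distance estimate can be guaranteed. Both issues are resolved simultaneously by the elementary observation that, since $|p|^2+|q|^2=1$, at least one of $|p|^2,|q|^2$ is at least $1/2$; renaming $p$ and $q$ if necessary brings us into the favorable regime handled above. No hypothesis on $P$ beyond its degree is needed.
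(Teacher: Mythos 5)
Your proposal is correct and follows essentially the same route as the paper's proof: apply Theorem~\ref{theorem:zhao-inequality} to $P(x)P(y)$ on $S^{2d-1}$, take the factor of smaller norm, and lift a hypothetical nearby zero $p'$ of $P$ back to the sphere along the $q$-direction (your $(p',\sqrt{1-|p'|^2}\,q/|q|)$ is the paper's $(p_0,tq)$), then contradict the $\Theta(1/n)$ angular separation. The quantitative details ($|q|\ge 1/\sqrt2$ controlling the denominator, the factorization $|p'|^2-|p|^2=(|p'|-|p|)(|p'|+|p|)$) match the paper's computation.
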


A more careful reduction to Theorems \ref{theorem:zhao-inequality} and \ref{theorem:zhao-equality} in the proof of the following theorem allows us to find an answer to the question with the tight constant $1/n$.

\begin{theorem}
\label{theorem:ball-strong}
There exists a sequence of even univariate analytic functions $G_n : \mathbb R\to\mathbb R$ with the following property. For every nonzero polynomial $P\in \mathbb R[x_1, \ldots, x_d]$ of degree $n$, some of the points of $B^d\subset\mathbb R^d$, where the absolute value of the expression $P(x)G_n(|x|)$ attains its maximum on the ball $B^d$, is at distance at least $\frac{1}{n}$ from the zero set of the polynomial $P$.
\end{theorem}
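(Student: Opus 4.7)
The plan is to apply a lifting argument similar to the one that proves Theorem~\ref{theorem:ball-weak2}, but with the weight $G_n$ carefully designed to exploit the rigidity of Theorem~\ref{theorem:zhao-equality}, upgrading the constant from $O(1/n)$ to the sharp $1/n$.

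Since $G_n$ is even and analytic, one can write $G_n(t)=H_n(t^2)$ with $H_n$ analytic, approximating by polynomials of degree $N$ where needed. Set $\tilde P(x,y)=P(x)H_n(1-y^2)$, a polynomial of total degree $n+2N$ on $\mathbb R^{d+1}$. On $S^d\subset\mathbb R^{d+1}$, where $|x|^2+y^2=1$, this equals $P(x)G_n(|x|)$, and the upper-hemisphere projection $(x,y)\mapsto x$ identifies maxima of $|\tilde P|$ on $S^d$ with maxima of $|P(x)G_n(|x|)|$ on $B^d$ (the $y\mapsto -y$ symmetry accounts for the other hemisphere). Applying Theorem~\ref{theorem:zhao-inequality} to $\tilde P$, every maximum $(p,q)\in S^d$ is at spherical distance at least $\pi/(2(n+2N))$ from the zero set of $\tilde P$, which decomposes into the sphere-lifts $(x_0,\pm\sqrt{1-|x_0|^2})$ of zeros $x_0$ of $P$ in $B^d$ and the latitude circles $\{y=y_i\}$ with $H_n(1-y_i^2)=0$.

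The angular bound converts to a Euclidean bound via the identity
\[
|p-x_0|^2 \;=\; 4\sin^2\tfrac{\alpha}{2}-\bigl(\sqrt{1-|p|^2}-\sqrt{1-|x_0|^2}\bigr)^2,
\]
where $\alpha$ is the arc from $(p,\sqrt{1-|p|^2})$ to $(x_0,\sqrt{1-|x_0|^2})$: a small Euclidean $|p-x_0|$ forces both $\alpha$ to be small (ruled out by Theorem~\ref{theorem:zhao-inequality}) and the two $y$-coordinates to nearly coincide (blocked by choosing $G_n$ whose latitude zeros pin the maximum's latitude to a controlled band). Closing the remaining constant gap to the sharp value $1/n$ uses Theorem~\ref{theorem:zhao-equality}: in any extremal case where $(p,q)$ is at angular distance exactly $\pi/(2(n+2N))$ from a zero of $\tilde P$, the restriction of $\tilde P$ to a suitable great circle through $(p,q)$ must be a Chebyshev polynomial with $4(n+2N)$ equispaced zeros and maxima, and the latitude zeros of $H_n$ intersected with that circle must fit into this equispaced pattern. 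The main obstacle is to exhibit an explicit sequence of analytic $G_n$ whose latitude-zero structure is compatible with the Chebyshev rigidity only when the Euclidean distance between the maximum and each zero of $P$ is at least $1/n$; plausible candidates include Chebyshev-type functions such as $G_n(t)=T_n(\sqrt{1-t^2})$ for even $n$ (with suitable modifications for odd $n$) or analytic limits of finite-degree weights tuned to extremal polynomials, and verifying that some such $G_n$ realises the sharp constant $1/n$ is the principal technical task.
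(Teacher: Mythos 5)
Your outline has the right general flavor --- lift $P$ times a radial weight to a sphere in $\mathbb R^{d+1}$, arrange the weight's zeros as latitude spheres that confine the maximum, and invoke Theorems \ref{theorem:zhao-inequality} and \ref{theorem:zhao-equality} --- but there are two genuine gaps, one of which is fatal to the route as stated. You lift to the \emph{unit} sphere $S^d$, where $\widetilde P(x,y)=P(x)H_n(1-y^2)$ has degree $n+2N$. Theorem \ref{theorem:zhao-inequality} then yields angular separation only $\pi/(2(n+2N))$, and since a chord is never longer than its arc, the Euclidean separation you can extract is at most $\pi/(2(n+2N))$. This drops below $1/n$ as soon as $N>n(\pi-2)/4$, and since the analytic weight must be approximated by polynomials of unbounded degree $N$, the bound collapses to $0$. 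No choice of weight and no appeal to the equality case can repair this: the loss is already present in the hypothesis of the theorem you are applying. The paper's essential trick, which is absent from your proposal, is to put the cap on a sphere $S_k$ of radius $r_k=2k/(n\pi)$ that \emph{grows with the degree $k$} of the approximating polynomial $\widetilde P_k$. Then the guaranteed spherical separation is $\pi r_k/(2k)=1/n$ for every $k$, independent of how large $k$ is, and letting $k\to\infty$ flattens the cap onto $B^d$ and makes spherical distance converge to Euclidean distance.

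The second gap is that the construction of $G_n$ is exactly what you defer as ``the principal technical task,'' and your candidate $T_n(\sqrt{1-t^2})$ is not the right object. The paper takes $g_n$ to be the \emph{tail} of the infinite product for $\cos x$ (or $\sin x/x$), i.e.\ the trigonometric function with its $\sim n$ innermost zero factors deleted, evaluated at $n\pi|x|/2$, and approximates it by Chebyshev polynomials $T_k(x/k)$ with their innermost zeros deleted. On $S_k$ the zero set of this multiplier is a family of $k-n$ latitude spheres spaced exactly $2/n$ apart, leaving two polar caps of spherical radius $1+1/n$; every point of a latitude band is within $1/n$ of a zero, so a maximum at distance $>1/n$ from the zero set is forced into a cap. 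For the equality case one still needs the geometric observation that the extremal great circle of Theorem \ref{theorem:zhao-equality} must cross each latitude sphere twice (else it would contain fewer than $2k$ zeros) and therefore traverses the polar cap, where one of its $2k$ equispaced maxima must lie. Your sketch gestures at ``pinning the maximum's latitude to a controlled band'' but does not supply either the weight or this counting argument, so the proof is not complete as written.
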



\begin{remark}
The precise definition of the multiplier $G_n$ is given in \eqref{equation:g_n}, \eqref{equation:bigg} of the proofs section. Below we explicitly state the result that directly follows from this theorem without mentioning the multiplier.
\end{remark}

\begin{corollary}
\label{corollary:ball-strong}
For every nonzero polynomial $P\in \mathbb R[x_1, \ldots, x_d]$ of degree $n$, there exists a point of $B^d\subset\mathbb R^d$ at distance at least $\frac{1}{n}$ from the zero set of the polynomial $P$.
\end{corollary}

Thinking about a statement generalizing both Bang's theorem and the results on zeros of polynomials, we formulate the following conjecture.

\begin{conjecture}
\label{conjecture:polynomial-planks}
Assume that $P_1,\ldots, P_N\in \mathbb R[x_1, \ldots, x_d]$ are nonzero polynomials and $\delta_1,\ldots, \delta_N > 0$ are such that
\[
\sum_{k=1}^N \delta_k \deg P_k \le 1.
\]
Then there exists a point $p\in B^d\subset\mathbb R^d$ such that, for every $k=1,\ldots, N$, the point $p$ is at distance at least $\delta_k$ from the zero set of $P_k$.
\end{conjecture}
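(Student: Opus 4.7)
The plan is to reduce the conjecture to Theorem~\ref{theorem:ball-strong} by folding the varying targets $\delta_k$ into a single weighted extremizer. Set $n_k=\deg P_k$. The case $N=1$ is exactly Theorem~\ref{theorem:ball-strong}, while the case of linear $P_k$'s with unit gradients recovers Bang's theorem for the Euclidean ball, so any proof must specialize correctly in both extremes. The naive first attempt -- applying Theorem~\ref{theorem:ball-strong} to the product $P(x)=P_1(x)\cdots P_N(x)$ of degree $n=n_1+\cdots+n_N$ -- yields a point $p$ at distance only $1/n$ from every zero set simultaneously, and this is too weak for those indices with $\delta_k\gg 1/n$.

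To differentiate among the $P_k$'s, I would look at a weighted extremizer of the form
\[
F(x) \;=\; P_1(x)^{a_1}\cdots P_N(x)^{a_N}\cdot G(|x|),
\]
with positive integer weights $a_k$ and an even analytic radial multiplier $G$ adapted to the total weighted degree $m=\sum_k a_k n_k$. The correct scaling should be $a_k$ proportional to $\delta_k$: the one-dimensional toy $x^{a_1}(1-x)^{a_2}$ on $[0,1]$ attains its maximum at the point splitting $[0,1]$ in the ratio $a_1:a_2$, imitating exactly the desired split into distances $\delta_1:\delta_2$, and the equality analysis in Theorem~\ref{theorem:zhao-equality} suggests an analogous extremal picture in which the interlaced gaps along a suitable circle through the maximum of $|F|$ have prescribed lengths $\delta_k$, perfectly matched to $\sum_k\delta_k n_k\le 1$. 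When the $\delta_k$'s are irrational one would clear denominators and pass to a limit.

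The main obstacle, I expect, is that the straightforward choice of weights $a_k$ together with a purely radial $G$ can only produce the uniform bound $1/m$ coming from a direct application of Theorem~\ref{theorem:ball-strong}, and recovering \emph{individualized} $\delta_k$ lower bounds from that uniform bound seems to require either a non-radial, $P_k$-specific multiplier adapted to the geometry of each $Z(P_k)$, or a lift of the problem to the product space $(\mathbb{R}^d)^N$ in the spirit of Theorem~\ref{theorem:ball-weak2}, together with an argument forcing the extremizer to collapse onto the diagonal. An alternative route -- an iterative Bang-style reduction that at each step removes the polynomial with the largest $\delta_k$ by applying Theorem~\ref{theorem:ball-strong} inside a suitably shrunk region and then reflects the remaining configuration in the corresponding hypersurface -- would demand substantial new control over the residual region after each step, and this is where I expect the real difficulty to lie.
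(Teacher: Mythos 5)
This statement is an open conjecture in the paper: the authors explicitly present it as a \emph{conjecture}, offer no proof, and only ``justify'' it by proving the complex projective analog (Theorem \ref{theorem:ortega-moreno2}), where the weighted product $P_1^{\delta_1^2}\cdots P_N^{\delta_N^2}$ works because $|Q|$ is subharmonic and the maximum principle applies. So there is nothing in the paper to compare your argument against, and --- to your credit --- you have not actually produced an argument: your proposal is a survey of candidate strategies together with an honest account of why each one stalls. That is not a proof, and you should not present it as one.

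That said, your diagnosis of the difficulty is accurate and matches the state of the art reflected in the paper. You correctly observe that applying Theorem \ref{theorem:ball-strong} to the plain product $P_1\cdots P_N$ only yields the uniform distance $1/(\sum_k \deg P_k)$, which is useless for indices with large $\delta_k$; and your instinct to use a weighted extremizer $\prod_k P_k^{a_k}$ with weights proportional to the $\delta_k$ is exactly the mechanism that makes the complex case go through in Theorem \ref{theorem:ortega-moreno2} (there with exponents $\delta_k^2$). The reason this does not transfer to the real ball is the one you half-identify: the real argument (Lemma \ref{lemma:zhao}) relies on comparing a trigonometric polynomial with $M\cos nx$ and counting zeros, and as the paper's remark after Theorem \ref{theorem:ortega-moreno2} shows, a maximum of a real trigonometric polynomial need not be at distance $\pi k/(2n)$ from a zero of multiplicity $k$ --- so the weighted-multiplicity bookkeeping that subharmonicity provides for free in $\mathbb C$ genuinely fails in $\mathbb R$. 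If you want to contribute here, the concrete open problem is to find a replacement for that multiplicity-weighted distance estimate in the real setting; absent that, the conjecture remains open and your write-up should be labeled as a discussion, not a proof.
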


\begin{remark}
Equality cases in Theorem \ref{theorem:ball-strong} and Conjecture \ref{conjecture:polynomial-planks} are not just those defined by hyperplane configurations from the Bang theorem. Apart from sets of parallel hyperplanes, the bounds are sharp for unions of cylinders over concentric spheres of any dimension or even combinations of cylinders over spheres of different dimensions. In particular, for $n=2m$, the bound of Theorem \ref{theorem:ball-strong} is sharp for $P=\prod\limits_{i=1}^m (x_1^2+\ldots+x_{k_i}^2-(\frac {2i-1} {2m})^2)$, where $\{k_i\}$ is an arbitrary increasing sequence of positive integers no greater than $d$. We do not know whether all equality cases can be obtained in a similar manner.
\end{remark}

In order to justify Conjecture \ref{conjecture:polynomial-planks} we note that \cite{ortegamoreno2021} essentially provides an approach to the complex projective version of this conjecture that we state and prove explicitly.

\begin{theorem}
\label{theorem:ortega-moreno2}
Assume that $P_1,\ldots, P_N\in \mathbb C[z_1, \ldots, z_d]$ are nonzero homogeneous polynomials and $\delta_1,\ldots, \delta_N > 0$ are such that
\[
\sum_{k=1}^N \delta_k^2 \deg P_k \le 1.
\]
Then the point of maximum of the absolute value of $P_1^{\delta_1^2}\cdots P_N^{\delta_N^2}$ on the unit sphere $S^{2d-1}\subset\mathbb C^d$ is, for every $k$, at angular distance at least $\arcsin\delta_k$ from the intersection of the zero set of $P_k$ with $S^{2d-1}$.
\end{theorem}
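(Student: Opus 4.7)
The plan is to extend the complex polynomial method of \cite{ortegamoreno2021} to the weighted product $F(z) := \prod_k |P_k(z)|^{\delta_k^2}$. Let $p \in S^{2d-1}$ maximize $F$ (so $F(p)>0$), fix an index $j$, and take any zero $q \in S^{2d-1}$ of $P_j$; the goal is $\sin\theta \ge \delta_j$ where $\theta$ is the Fubini--Study angular distance from $p$ to $q$. Write $q = \alpha p + \beta u$ with $u$ a unit Hermitian-orthogonal vector to $p$ and $|\alpha|^2+|\beta|^2=1$; by rotating both $q$ and $u$ by unit-modulus phases (using the $\mathbb{C}^*$-homogeneity of each $P_k$) we may take $\alpha,\beta \ge 0$, and then $\sin\theta = \beta$. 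Along the holomorphic disc $\tilde\gamma(\zeta) := (p+\zeta u)/\sqrt{1+|\zeta|^2}$ one has $\tilde\gamma(0) = p$ and $\tilde\gamma(\zeta_0) = q$ for $\zeta_0 = \beta/\alpha$. Setting $h_k(\zeta) := P_k(p+\zeta u)/P_k(p)$, these are polynomials of degree $\le n_k := \deg P_k$ with $h_k(0)=1$ and $h_j(\zeta_0)=0$; the maximality of $F(p)$ combined with homogeneity gives
\[
\prod_{k=1}^N |h_k(\zeta)|^{\delta_k^2} \;\le\; (1+|\zeta|^2)^{s/2}, \qquad s := \sum_k \delta_k^2 n_k \le 1,
\]
for all $\zeta \in \mathbb{C}$, with equality at $\zeta = 0$.

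To isolate information about the single polynomial $h_j$, I would approximate each $\delta_k^2$ from below by a rational $a_k/m$ with $a_k \in \mathbb{Z}_{\ge 0}$ and $\sum_k a_k n_k \le m$, and form the genuine polynomial $H := \prod_k h_k^{a_k}$. Then $\deg H \le m$, $H(0) = 1$, $|H(\zeta)| \le (1+|\zeta|^2)^{m/2}$, and $H$ has a zero of multiplicity $\ge a_j$ at $\zeta_0$. Invoking a multiplicity-sensitive refinement of Theorem~\ref{theorem:ortega-moreno} (see the next paragraph), one obtains $|\zeta_0|^2 \ge a_j/(m-a_j)$, equivalently $\sin^2(\arctan|\zeta_0|) \ge a_j/m$. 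Letting $m \to \infty$ along approximations $a_k/m \to \delta_k^2$ produces $\beta^2 \ge \delta_j^2$, as desired.

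The main obstacle is the required multiplicity-sensitive lemma, namely: \emph{if $H \in \mathbb{C}[\zeta]$ has $\deg H \le m$, $H(0) = 1$, $|H(\zeta)| \le (1+|\zeta|^2)^{m/2}$ for all $\zeta$, and $H$ vanishes to order $a$ at some $\zeta_0$, then $|\zeta_0|^2 \ge a/(m-a)$.} This inequality is sharp, with extremizer $(1-\zeta/\zeta_0)^a(1-\zeta/\zeta_1)^{m-a}$ where $\zeta_1$ is Fubini--Study antipodal to $\zeta_0$ and $|\zeta_0|^2 = a/(m-a)$. Its proof should follow the trigonometric argument of \cite{ortegamoreno2021}: restrict $|H|^2$ to the real great circle $t \mapsto \cos t\cdot p + \sin t\cdot u$ to produce a non-negative trigonometric polynomial of degree $2m$ with a double zero at $t = 0$ coming from maximality and a zero of multiplicity $2a$ at the angle corresponding to $\zeta_0$; then Fej\'er--Riesz factorization combined with an interlacing count of extrema that accounts for the multiplicity $a$ (rather than $1$ as in the original Ortega-Moreno proof) should yield the sharpened bound $\sin^2 t_0 \ge a/m$. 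This last counting step, which must be fine enough to see the contribution of the full $a$-fold zero, is where the technical work of the proof concentrates.
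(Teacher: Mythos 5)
Your geometric setup (restricting to the complex line through the maximizer $p$ and the zero of $P_j$, passing to the affine coordinate, and reading off $\sin\theta$ from $|\zeta_0|$) matches the paper's, but from there you take a different route that has two genuine gaps. First, the rational-approximation step does not transfer the maximality inequality: from $\prod_k|h_k(\zeta)|^{\delta_k^2}\le(1+|\zeta|^2)^{s/2}$ you cannot conclude $\prod_k|h_k(\zeta)|^{a_k}\le(1+|\zeta|^2)^{m/2}$, because replacing the exponent $\delta_k^2$ by $a_k/m$ moves each factor up or down depending on whether $|h_k(\zeta)|$ is larger or smaller than $1$; the point $p$ maximizes the $\delta$-weighted product, not the $a/m$-weighted one. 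The natural repair is to take maximizers $p_m$ of the approximating products $\prod|P_k|^{a_k/m}$ and pass to a limit, but that only establishes the conclusion for \emph{some} accumulation point of the $p_m$, i.e. for some maximizer of the original product, not for an arbitrary one as the theorem asserts. Second, and more seriously, your ``multiplicity-sensitive lemma'' is left unproven, and it is not a small technicality: applied to $H=L^aR$ with $L$ linear it is exactly Theorem~\ref{theorem:ortega-moreno2} for $N=2$ with weights $a/m$ and $1/m$, so your argument reduces the general theorem to an essentially equivalent special case. The proposed proof of that lemma by an interlacing count on a real great circle that ``sees'' the full $a$-fold zero is precisely the kind of argument that the remark of Fedor Petrov in the introduction shows to fail for multiple zeros of trigonometric polynomials (a maximizer of a degree-$n$ trigonometric polynomial can be closer than $\pi k/(2n)$ to a $k$-fold zero); to save it you would have to exploit the non-negativity of $|H|^2$ in an essential way, and you give no indication of how.

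The paper avoids both difficulties by never discretizing the exponents. It writes $|P|=|z^{\delta_k^2}Q(z)|(1+|z|^2)^{-D/2}$ in the affine coordinate centered at the zero $p_0$ of $P_k$, where $Q=\prod_\ell R_\ell^{\delta_\ell^2}$ and hence $\ln|Q|=\sum_\ell\delta_\ell^2\ln|R_\ell|$ is subharmonic. Setting $d\ln|P|=0$ at the maximizer $a>0$ gives $d\ln|Q|=\bigl(\frac{Da}{1+a^2}-\frac{\delta_k^2}{a}\bigr)dx$ there, and if $a^2<\delta_k^2/(D-\delta_k^2)$ the maximum principle for the subharmonic $|Q|$ on $\{|z|\le a\}$ produces a point on $\{|z|=a\}$ where $|P|$ exceeds its value at $a$, a contradiction. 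This handles arbitrary real weights in one stroke and works for every maximizer. I would suggest either adopting that argument, or, if you wish to keep your reduction, proving your lemma by the same subharmonicity device (write $H=(\zeta-\zeta_0)^a\widetilde Q$ and run the identical computation) rather than by interlacing, and restating the approximation step so that the maximizer is re-chosen at each rational stage.
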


\begin{remark}
Since polynomials in the above theorem are homogeneous, one may change the conclusion to more classical \cite{ball2001}: \emph{For every $k$, $p$ is at Euclidean distance at least $\delta_k$ from the zero set of $P_k$.}
\end{remark}

\begin{remark}
In Theorem \ref{theorem:ortega-moreno} the requirement that the complex polynomial is homogeneous is essential. Otherwise the polynomial $P(z_1,\ldots, z_d) = z_1^n + 1$ attains the maximal absolute value on the unit sphere at points with $z_1 = e^{i\frac{2k}{n}\pi}$ (and all other $z_i=0$), $k=0,1,\ldots, n$. On the other hand, $P$ has zeros whenever $z_1 = e^{i\frac{2k+1}{n}\pi}$, at angular distance $\pi/n$ from their neighboring points of maximum. Since for sufficiently large $n$, $\pi/n < \arcsin\frac{1}{\sqrt n}$, the statement is not true in the nonhomogeneous case. Analogously, Theorem \ref{theorem:ortega-moreno2} would fail for the set of $n$ nonhomogeneous linear polynomials $P_k(z_1,\ldots,z_d) = z_1 - e^{i\frac{2k+1}{n}\pi}$, $k=0, 1,\ldots, n-1$, $\delta_k = \frac{1}{\sqrt n}$.
\end{remark}

\begin{remark}[Communicated by Fedor Petrov]
One can also formulate the real spherical version of the conjecture as an extension of Theorem \ref{theorem:zhao-inequality}. The approach of Zhao (see Lemma \ref{lemma:zhao} below) would have a potential to work directly if we could claim that any point of maximum of the absolute value of a trigonometric polynomial of degree $n$ is at distance at least $\frac{\pi k}{2n}$ from a zero of multiplicity $k$. Unfortunately, this is false as the trigonometric polynomial of degree $n=2$
\[
T(x) = (1 - \cos x) (0.9 + \cos x)
\]
has a zero of multiplicity $k=2$ at the origin. Its derivative vanishes when $\cos x = 0.05$ or $\sin x = 0$, and it is easy to see that the distance from the double zero to the point of maximum is $\arccos 0.05 < \frac{\pi}{2} = \frac{\pi k}{2 n}$.
\end{remark}


\begin{definition}
A \emph{spherical segment of width $2\delta$} on the unit sphere $S^{d-1}$ is a closed $\delta$-neighborhood (in the intrinsic metric of the sphere) of an intersection of $S^{d-1}\subset\mathbb R^d$ with an affine hyperplane $H\subset \mathbb R^d$. 
\end{definition}

In case a hyperplane $H$ contains the origin, a spherical segment is called \textit{a zone}. In 1973, Fejes T\'{o}th conjectured \cite{toth1973} that if $n$ equal zones cover the sphere then their width is at least $\pi/n$. He also formulated the generalized conjecture that the total width of any set of zones covering the sphere is at least $\pi$. The generalized conjecture was proved by Jiang and Polyanskii in \cite{polyajiang2017} based on the approaches of Bang \cite{bang1951} and Goodman-Goodman \cite{goodman1945} (see also \cite{polyanskii2021cap} for a stronger version of the zone conjecture). In \cite{ortega2021optimal}, Ortega-Moreno found another proof of the Fejes T\'{o}th conjecture for equal zones that was recently streamlined by Zhao \cite{zhao2021}. As a corollary of our results, we show how to use this approach to prove the generalized zone conjecture. Moreover, we prove that the generalized conjecture also holds for spherical segments, thus resolving Conjectures 1 and 2 of Jiang and Polyanskii \cite{polyajiang2017}.


\begin{corollary}
\label{corollary:pj2}
If the unit sphere $S^{d-1}$, $d\ge 2$, is covered by a set of spherical segments, then the sum of their widths is at least $\pi$.
\end{corollary}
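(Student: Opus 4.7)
I will argue the contrapositive: assuming the segments $W_1, \dots, W_N$ cover $S^{d-1}$ and $\sum_{k=1}^N 2\delta_k < \pi$, I will produce a point of $S^{d-1}$ lying in no $W_k$. (Infinite covers reduce to finite ones by a routine compactness/perturbation: inflate each segment slightly so that the widths still sum to strictly less than $\pi$, then apply Heine--Borel to the resulting open cover.) Write $s := \sum_{k} \delta_k < \pi/2$. For each $k$ let $v_k \in S^{d-1}$ be the unit normal to the bounding hyperplane $H_k$ of $W_k$ and $\beta_k = \arccos h_k \in [0,\pi]$ the angular latitude of the central circle $C_k = H_k \cap S^{d-1}$, so that
\[
W_k = \{x \in S^{d-1} : \arccos\langle v_k, x\rangle \in [\beta_k - \delta_k, \beta_k + \delta_k]\}.
\]

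The strategy is to place $m_k$ equally spaced ``latitude zeros'' inside each $W_k$ using degree-one factors, form their product, and apply Theorem \ref{theorem:zhao-inequality} to its maximizer on $S^{d-1}$. For positive integer weights $m_k$ to be determined, define the latitudes
\[
\alpha_k^{(j)} := \beta_k - \delta_k + \frac{(2j-1)\delta_k}{m_k},\quad j = 1,\dots,m_k,
\]
the linear polynomials $L_k^{(j)}(x) := \langle v_k, x\rangle - \cos\alpha_k^{(j)}$, and the product
\[
P(x) := \prod_{k=1}^N\prod_{j=1}^{m_k} L_k^{(j)}(x),\qquad n := \deg P = \sum_k m_k.
\]
The zero set of $L_k^{(j)}$ on $S^{d-1}$ is the latitude circle of angle $\alpha_k^{(j)}$ from the pole $v_k$, and the spherical distance from any $x \in S^{d-1}$ to this circle equals $|\arccos\langle v_k, x\rangle - \alpha_k^{(j)}|$. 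Let $p \in S^{d-1}$ be a maximizer of $|P|$; by Theorem \ref{theorem:zhao-inequality},
\[
|\arccos\langle v_k, p\rangle - \alpha_k^{(j)}| \;\ge\; \frac{\pi}{2n}\qquad \text{for every }k, j.
\]
On the other hand, the $m_k$ latitudes $\alpha_k^{(j)}$ are equispaced with step $2\delta_k/m_k$ inside $[\beta_k - \delta_k,\beta_k + \delta_k]$, and the outermost ones sit at distance $\delta_k/m_k$ from the endpoints, so every point of $W_k$ lies within $\delta_k/m_k$ of some $\alpha_k^{(j)}$. Consequently, as soon as $\pi/(2n) > \delta_k/m_k$, i.e.\ $m_k > 2n\delta_k/\pi$, the maximizer $p$ escapes $W_k$.

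The main obstacle is the self-referential choice of the integer weights: I need positive integers $m_k$ simultaneously satisfying $m_k > 2n\delta_k/\pi$ with $n = \sum_k m_k$. Summing these yields the consistency condition $n > 2ns/\pi$, equivalently $s < \pi/2$, which is exactly the hypothesis; hence the system is solvable. Explicitly, for $C > 4N/(\pi - 2s)$, the choice $m_k := \lceil C\delta_k\rceil + 1$ gives $n \le Cs + 2N$ and $m_k > C\delta_k > 2n\delta_k/\pi$ by direct computation. The resulting $p$ then lies outside every $W_k$, contradicting the covering assumption. Degenerate configurations (spherical caps, in which $\beta_k \pm \delta_k$ overflows $[0,\pi]$) are handled by truncating the latitude range to $[0,\pi]$; the zero-spacing analysis only tightens and the argument proceeds identically, yielding $\sum_k 2\delta_k \ge \pi$ as claimed.
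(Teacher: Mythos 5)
Your proof is correct and is essentially the paper's argument: both reduce to Theorem \ref{theorem:zhao-inequality} applied to a product of linear forms whose zero circles are equally spaced inside each spherical segment, so that the maximizer of the product is forced out of every segment. The only difference is bookkeeping --- the paper first treats equal widths (one core per segment), then rational widths by subdividing each segment into pieces of width $1/N$, then perturbs to reach irrational widths, whereas you pick the integer multiplicities $m_k$ directly via the consistency condition $m_k > 2n\delta_k/\pi$; unwound, the two constructions yield the same polynomial.
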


\begin{remark}
Similar to the proof of this corollary, Theorem \ref{theorem:ball-strong} (in the form of Corollary~\ref{corollary:ball-strong}) implies Bang's theorem on covering of a ball by planks. We leave this implication as an exercise to the reader.
\end{remark}

We recall one more conjecture that is still open. 

\begin{conjecture}
If $d\ge 4$ and the unit sphere $S^{d-1} \subset \mathbb R^d$ is covered by a finite number of planks then the sum of their widths is at least $2$.
\end{conjecture}

\begin{remark}
The case $d=2$ of this conjecture obviously fails. The case $d=3$ is known from \cite{moese1932} by calculating areas of spherical segments. When $d\ge 4$ and the number of planks is at most $3$, the conjecture reduces to the result of \cite{moese1932}, similarly to the proofs of \cite[Theorem~5.3 and Corollary~5.4]{bezdek2009tarski} with integration over spheres instead of balls. The general case $d\ge 4$ of the conjecture is unlikely to follow from the polynomial approach because the sphere itself is defined by the polynomial of degree $2$. Moreover, the proofs below are based on reducing the more general statement to $d=2$ but this conjecture fails for $d=2$.
\end{remark}

\subsection*{Acknowledgments} The authors thank Arseniy Akopyan, Fedor Petrov, Alexey Balitskiy, Vladimir Dol'nikov, Zilin Jiang, Danil Skuridin, and the anonymous referee for discussions and useful remarks.

\section{Proof of the real spherical case}

The proof is based on the general version of the lemma that was essentially proved in \cite{zhao2021}. We present here the full argument for a reader's convenience. Unlike in \cite{zhao2021}, here we do not assume that a polynomial is homogeneous, which eventually allows us to deduce Corollary \ref{corollary:pj2}, as well as Theorems \ref{theorem:ball-weak2} and \ref{theorem:ball-strong}.

\begin{lemma}[Zhao, \cite{zhao2021}]
\label{lemma:zhao}
If a polynomial $P\in \mathbb R[x_1, x_2]$ of degree $n$ has nonzero restriction to the unit circle $S^1\subset\mathbb R^2$ and attains its maximal absolute value on $S^1$ at a point $p$ then $p$ is at angular distance at least $\frac{\pi}{2n}$ from the intersection of the zero set of $P$ with $S^1$. If the distance from $p$ to a zero of $P$ is precisely $\frac{\pi}{2n}$ then $2n$ zeros and $2n$ maxima of the absolute value of $P$ interlace and split the circle $S^1$ into $4n$ intervals of length $\frac{\pi}{2n}$ each.
\end{lemma}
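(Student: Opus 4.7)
The plan is to restrict the problem to the circle, compare with the extremal trigonometric polynomial $M\cos(n\theta)$, and run a zero-counting argument.

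Parametrize $S^1$ by the angle $\theta$ and set $Q(\theta) := P(\cos\theta, \sin\theta)$; this is a real trigonometric polynomial of degree at most $n$. After a rotation I may assume the maximum of $|Q|$ is attained at $\theta=0$ with $Q(0)=M := \max_{S^1}|P|$, so $Q'(0)=0$. Introduce
\[
R(\theta) := Q(\theta) - M\cos(n\theta),
\]
which again has degree at most $n$ and therefore has at most $2n$ zeros on $[0,2\pi)$, counted with multiplicity, unless it is identically zero.

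Two bookkeeping observations yield $2n$ guaranteed zeros of $R$ right away. First, $R(0)=R'(0)=0$, so $0$ is a zero of multiplicity at least two. Second, at the sample points $\theta_k := k\pi/n$ one has $M\cos(n\theta_k)=(-1)^k M$, which is extremal for $Q$ in absolute value, so $R(\theta_k)$ is weakly $\le 0$ for even $k$ and weakly $\ge 0$ for odd $k$; consequently $R$ changes sign in each of the $2n-2$ consecutive intervals $(\theta_k,\theta_{k+1})$, $k=1,\ldots,2n-2$, contributing $2n-2$ more zeros of odd multiplicity. The budget of $2n$ is now saturated.

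Assume, for contradiction, that $Q$ has a zero at $\alpha$ with $0<\alpha\le\pi/(2n)$. Then $R(\alpha)=-M\cos(n\alpha)\le 0$, strictly when $\alpha<\pi/(2n)$. If $\alpha=\pi/(2n)$, then $R(\alpha)=0$ lies strictly between $0$ and $\theta_1=\pi/n$ and hence is a fresh zero, bringing the count to $2n+1$. If $\alpha<\pi/(2n)$, I would use a trichotomy on $R''(0)$: if $R''(0)>0$ then $R>0$ just to the right of $0$, so the sign change to $R(\alpha)<0$ creates a new zero in $(0,\alpha)$; if $R''(0)<0$ then $R<0$ on both sides of $0$, and the reversal back to $R(\theta_{2n-1})\ge 0$ creates a new zero on the short left-arc $(\theta_{2n-1},2\pi)$; if $R''(0)=0$ the multiplicity at $0$ jumps to $\ge 3$. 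In every subcase the total is at least $2n+1$, so $R\equiv 0$ and $Q(\theta)=M\cos(n\theta)$. For $\alpha<\pi/(2n)$ this contradicts $Q(\alpha)=0$ because $M\cos(n\theta)$ has no zero in $(0,\pi/(2n))$. For $\alpha=\pi/(2n)$ it gives precisely the equality structure: the $2n$ zeros $(2k+1)\pi/(2n)$ and the $2n$ $|Q|$-maxima $k\pi/n$ interlace at equal spacing $\pi/(2n)$ on $C=S^1$.

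The step I expect to require the most care is the middle branch of the trichotomy, where $R''(0)<0$: here the extra zero sits on the arc opposite to $\alpha$, and one must check that it is genuinely disjoint from the $2n$ zeros already accounted for so that no multiplicity is double-booked. The other two branches, as well as the initial reduction to the planar setting, are routine.
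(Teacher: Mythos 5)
Your proposal follows essentially the same route as the paper: compare the restriction of $P$ to the circle with $M\cos(n\theta)$, count zeros of the difference $R$, and conclude $R\equiv 0$ in any borderline situation. One step is stated too quickly, though. From the weak inequalities $R(\theta_k)\le 0$ (even $k$) and $R(\theta_k)\ge 0$ (odd $k$) you assert that ``$R$ changes sign in each of the $2n-2$ consecutive intervals $(\theta_k,\theta_{k+1})$''; this is false as written when $R$ vanishes at an interior sample point $\theta_k$ (then there need be no zero in the adjacent open intervals). The count of $2n-2$ zeros on $[\theta_1,\theta_{2n-1}]$ survives, but only via the observation the paper makes explicitly: if $R(\theta_k)=0$ for $1\le k\le 2n-1$ then $Q(\theta_k)=(-1)^kM$ is a global extremum of $Q$, so both $Q$ and $M\cos(n\theta)$ are critical there, hence $R$ has a zero of multiplicity at least $2$ at $\theta_k$, and one copy can be assigned to each neighboring interval. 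The same observation is what rescues the ``double-booking'' worry you flag in the $R''(0)<0$ branch (a vanishing $R(\theta_{2n-1})$ forces a double zero there, only one copy of which was previously counted).

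A smaller structural difference: the paper avoids your trichotomy on $R''(0)$ altogether. Having identified $2n$ zeros, it notes that any further zero of $R$ in $[-\pi/n,0)\cup(0,\pi/n]$ already forces $R\equiv 0$; otherwise $R$ has constant sign on each punctured half of that interval, and since $R(\pm\pi/n)=Q(\pm\pi/n)+M\ge 0$, one gets $Q\ge M\cos(n\theta)>0$ on $(-\pi/(2n),\pi/(2n))$ directly. This is slightly cleaner than arguing through the second derivative at the maximum, but your version, once the multiplicity bookkeeping above is inserted, is also sound and yields the same equality characterization $Q=M\cos(n\theta)$.
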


\begin{proof}
We switch to the polar coordinate on the circle and consider $P$ as a trigonometric polynomial of degree $n$. From here on $P(t)$ is a trigonometric polynomial in $t\in\mathbb R$ with period $2\pi$ and degree $n$.

Any such polynomial has at most $2n$ zeros (counted with multiplicities) over its period $2\pi$, or is identically zero. This is true since the zeros are the intersections of the zero set of $P$ with the circle $S^1$ (curve of degree $2$). 
	
After a suitable shift, we assume that $P$ has maximal absolute value $M$ at the origin. Consider the trigonometric polynomial of degree at most $n$
\[
Q(t) = P(t) \pm M \cos nt.
\]
Choosing the sign $\pm$ appropriately we assume that $Q$ has zeros of multiplicity $2$ at the points $2\pi k$, $k\in\mathbb Z$. Apart from this, for $m=1,\ldots, 2n-1$, either the expression
\[
Q\left(\frac{m \pi}{n}\right) = P\left(\frac{m \pi}{n}\right) \pm M \cos m \pi = P\left(\frac{m \pi}{n}\right) \pm M (-1)^m
\]
has sign $\pm(-1)^m$, or $Q$ has a multiplicity $2$ zero at $\pi m/n$. 

Applying the intermediate value theorem carefully, one observes that there are at least $2n-2$ zeros of $Q$ (counted with multiplicity) on $\left[ \frac{\pi}{n}, \frac{(2n-1)\pi}{n} \right]$. Namely, when the values on the endpoints of $\left[ \frac{m \pi}{n}, \frac{(m+1)\pi }{n} \right]$, $m=1,\ldots,2n-2$, are nonzero, we find a zero of $Q$ in the interior. If $Q$ has a zero at a point $\pi m/n$ then at this point both $P$ and $M\cos nt$ are locally extremal and therefore this zero is of multiplicity at least $2$. In this case we assign those (at least two) zeros to the two neighboring intervals.

Since we also have a multiplicity $2$ zero at the origin, we have already identified $2n$ zeros of $Q$ per period. Moreover, if $Q$ has a zero in $\frac{-\pi}{n}$ or $\frac{\pi}{n}$ then the zero is of multiplicity $2$ and $Q$ has to be identically zero.
	
Therefore $Q$ is either identically zero (then $P(t)=\mp M\cos nt$ and the claim holds, including the characterization of the equality case), or $Q$ has no zero on the set
\[
\left[\frac{-\pi}{n}, 0\right)\cup \left(0, \frac{\pi}{n}\right].
\]
In particular, if $M>0$ then on $\left[\frac{-\pi}{n}, \frac{\pi}{n}\right]$ the inequality 
\[
P(t)\ge M \cos nt
\]
holds. If $M<0$ then the opposite inequality holds. In both cases, there are no zeros of $P$ in $\left(\frac{-\pi}{2n}, \frac{\pi}{2n}\right)$ apart from 0 itself.

In the above argument the only possibility to have the distance from a maximum point of $P$ to a zero of $P$ precisely $\frac{\pi}{2n}$ was $Q=0$, $P(t)=\mp M\cos nt$. In this case the $2n$ zeros and the $2n$ maxima of the absolute value of $P$ interleave and split the circle $S^1$ in $4n$ segments of length $\frac{\pi}{2n}$ each.
\end{proof}
	
\begin{proof}[Proof of Theorems \ref{theorem:zhao-inequality} and \ref{theorem:zhao-equality}]
We consider points $p_0$ from the zero set of $P$ and $p_m$ maximizing the absolute value of $P$ such that the angular distance between them is minimal possible. For the linear span of $p_0$ and $p_m$, $\langle p_0, p_m\rangle$, we apply Lemma~\ref{lemma:zhao} and prove that the angular distance between $p_0$ and $p_m$ is at least $\frac{\pi}{2n}$. By the second part of Lemma~\ref{lemma:zhao}, if the angular distance is precisely $\frac{\pi}{2n}$ then $\Sigma=S^{d-1}\cap \langle p_0, p_m\rangle$ is the needed circle.
\end{proof}

\begin{proof}[Proof of Corollary \ref{corollary:pj2}]
Call the \emph{core} of a spherical segment the intersection $H\cap S^{d-1}$, where $H$ is a hyperplane from the definition of a spherical segment. First, assume that spherical segments have equal widths $2\delta$ and there are $n$ spherical segments in total. If
\[
2 \delta n < \pi,
\]
which is equivalent to
\[
\delta< \frac{\pi}{2n},
\]
then consider the polynomial of degree $n$ 
\[
P = L_1\cdot \dots \cdot L_n,
\]
where $L_i=0$ is the linear equation of the $i^{\text{th}}$ core. 

Applying Theorem \ref{theorem:zhao-inequality} to $P$, we obtain a point $x\in S^{d-1}$ at spherical distance at least $\frac{\pi}{2n}> \delta$ from all the cores. This point does not belong to any of the spherical segments, and this case is done. Note that the equal width case works for spherical segments in general, not just zones.

The next case is when all widths of spherical segments are rational with a common denominator $N$. Then we split every spherical segment into several spherical segments of width $1/N$. 
This reduces the rational case to the case of equal spherical segment widths.

The general case is done by assuming the contrary, that is, the sum of widths of spherical segments is less than $\pi$. We slightly increase the width of each spherical segment so that it becomes rational but the sum of widths is still less than $\pi$. Then applying the rational case, we reach a contradiction.
\end{proof}

\section{Proofs of the complex spherical cases}

The proof is based on the lemma that was essentially proved in \cite{ortegamoreno2021}, but we present here the full argument for a reader's convenience.

\begin{lemma}[Ortega-Moreno, \cite{ortegamoreno2021}]
\label{lemma:ortega-moreno}
If a homogeneous polynomial $P\in \mathbb C[z_1, z_2]$ of degree $n$ is nonzero and attains its maximal absolute value on the unit sphere $S^3\subset\mathbb C^2$ at a point $p$ then $p$ is at angular distance at least $\arcsin\frac{1}{\sqrt{n}}$ from the intersection of the zero set of $P$ with $S^3$. 
\end{lemma}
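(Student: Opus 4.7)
The plan is to reduce the two-variable statement to a one-variable inequality about polynomials bounded by the weight $(1+|w|^2)^{n/2}$, working in the affine chart $w = z_2/z_1$, and then to obtain the bound by a zero-counting argument on an auxiliary polynomial, paralleling the Chebyshev-comparison trick used in the real case (Lemma \ref{lemma:zhao}).

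First, after a unitary change of coordinates on $\mathbb{C}^2$ and multiplying $P$ by a unit phase, we may assume $p = (1, 0)$ and $M := P(p) > 0$. Setting $F(w) := P(1, w)$, a polynomial of degree at most $n$ in one variable, the homogeneity of $P$ translates the bound $|P| \le M$ on $S^3$ into the weighted bound $|F(w)|^2 \le M^2(1+|w|^2)^n$ for all $w \in \mathbb{C}$, with equality at $w = 0$. Each zero $(\alpha, \beta) \in S^3$ of $P$ with $\alpha \ne 0$ corresponds to a zero $w_0 = \beta/\alpha$ of $F$; the Hermitian angle between $(1, 0)$ and the complex line through $(\alpha, \beta)$ equals $\arctan|w_0|$, and by choosing a representative on the Hopf fiber this also equals the minimum spherical distance on $S^3$ from $p$ to that zero complex line. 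Since $\arctan(1/\sqrt{n-1}) = \arcsin(1/\sqrt{n})$, the lemma reduces to showing that every zero $w_0$ of $F$ satisfies $|w_0| \ge 1/\sqrt{n-1}$.

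For the one-variable claim, we exploit that $h(w) := (1+|w|^2)^n - |F(w)|^2 \ge 0$ attains its minimum $0$ at $w = 0$. Computing Wirtinger derivatives at the origin forces $F'(0) = 0$, and the Hessian inequality there yields $|c_2| \le n/2$, where $c_2$ is the coefficient of $w^2$ in $F$. Writing $F(w) = \prod_{j=1}^n(1 - w/w_j)$ and applying Newton's identities, these become $\sum_j 1/w_j = 0$ and $\bigl|\sum_j 1/w_j^2\bigr| \le n$; the candidate extremal polynomial $F(w) \propto (\sqrt{n-1} - w)^{n-1}(1 + \sqrt{n-1}\,w)$, arising from $P = z_1^{n-1} z_2$ after the unitary rotation sending its maximum to $(1, 0)$, saturates both constraints simultaneously and has nearest zero at $w = -1/\sqrt{n-1}$.

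The main step, and the principal obstacle, is upgrading these local (critical-point and Hessian) constraints to the global bound $\min_j |w_j| \ge 1/\sqrt{n-1}$, since the local data alone are insufficient. Following \cite{ortegamoreno2021}, we would build an auxiliary polynomial from $F$ and a Fubini--Study ``coherent state'' of the form $(1 + \bar{a} w)^n$ aligned with a candidate nearest zero $a$ --- in the spirit of Zhao's trick of adding $\pm M\cos(nx)$ in Lemma \ref{lemma:zhao} --- and count its zeros via an intermediate value / sign-alternation argument on a carefully chosen real curve through $0$ and $a$. The resulting zero count would exceed the degree of the auxiliary polynomial whenever $|a| < 1/\sqrt{n-1}$, forcing it to vanish identically and identifying $F$ with the extremal above, a contradiction. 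Extending to general $d$ via restriction to the complex $2$-plane spanned by $p$ and the nearest zero then yields Theorem \ref{theorem:ortega-moreno}.
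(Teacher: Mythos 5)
Your setup and reduction are fine: passing to the chart $w=z_2/z_1$ with the maximum at $(1,0)$, translating the sup bound into $|F(w)|^2\le M^2(1+|w|^2)^n$, and checking $\arctan(1/\sqrt{n-1})=\arcsin(1/\sqrt n)$ so that the lemma becomes the claim that every zero of $F$ has modulus at least $1/\sqrt{n-1}$ --- all of this is correct. But the proof has a genuine gap exactly where you say it does: the ``main step'' is never carried out. The first- and second-order conditions at the origin ($F'(0)=0$, $|c_2|\le n/2$) are, as you yourself note, insufficient, and the paragraph that is supposed to close the argument is written entirely in the conditional (``we would build\dots'', ``would count\dots'', ``would exceed\dots''). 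No auxiliary polynomial is actually constructed, no real curve is specified, and no zero count is performed. Moreover, the proposed mechanism --- a Zhao-style sign-alternation/intermediate-value count against a ``coherent state'' $(1+\bar a w)^n$ --- is a real-variable device whose complex analogue is not at all routine; you give no reason why the zeros of the auxiliary object would be forced onto a single real curve where alternation could be read off, so this is not a proof strategy with a small hole but an unsubstantiated hope.

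For comparison, the paper's argument is both different and much shorter. It centers the affine chart at the \emph{zero} rather than at the maximum, writing $|P|=|zQ(z)|(1+|z|^2)^{-n/2}$ with $Q$ a polynomial, and assumes the maximum of $|P|$ sits at $a\in\mathbb R^+$. Setting $d\ln|P|=0$ at $a$ gives
\[
d\ln|Q| = \left(\frac{na}{1+a^2}-\frac1a\right)dx ,
\]
so if $a^2<\tfrac1{n-1}$ then $|Q|$ strictly increases as one moves from $a$ toward the origin; but $|Q|$ is subharmonic, hence its maximum over the disc $\{|z|\le a\}$ is attained on the circle $\{|z|=a\}$, on which both $|z|$ and the weight $(1+|z|^2)^{-n/2}$ are constant --- so $|P|$ would exceed its value at $a$ somewhere on that circle, a contradiction. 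The single idea you are missing is to factor the nearest zero out of $P$ and apply the maximum principle to the quotient $Q$; that replaces the entire hypothetical zero-counting apparatus. To make your write-up into a proof you would need either to adopt this device or to actually construct and analyze the auxiliary polynomial you allude to, neither of which is done.
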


\begin{proof}
In case $n=1$ $P$ is linear and it is easy to see that the maximum of $P$ is at distance $\arcsin 1 = \pi/2$ from its zero hyperplane. This allows us to assume $n>1$ and divide by $n-1>0$.
	
Note that $|P|$ is a well-defined function on the complex projective line $\mathbb CP^1 = S^3/S^1$, where $S^1\subset \mathbb C$ are the complex numbers of unit norm. From here on we consider $|P|$ as a function of the affine coordinate $w\in\mathbb C$.

Choose an affine coordinate $w=x+iy$ on the projective line so that $|P(0)|=0$. In such coordinates a point $w\in\mathbb C$ corresponds to
\[
\left(\frac{w}{\sqrt{1+|w|^2}}, \frac{1}{\sqrt{1+|w|^2}}\right)\in S^3
\]
and the absolute value we study expresses as
\[
|P| = |w Q(w)| \left(1 + |w|^2\right)^{-n/2},
\]
where $Q$ is a univariate polynomial. After a suitable rotation, we can assume that the maximum of $|P|$ is attained at $a\in\mathbb R^+$. Write down the differential of $\ln |P|$
\[
d\ln |P| = \Re \frac{dw}{w} + \Re\frac{Q'dw}{Q} - n \frac{xdx + ydy}{1 + |w|^2}
\]
and observe that it is 0 at $a$, that is,
\[
\frac{dx}{a} + \Re\frac{Q'dw}{Q} - n \frac{adx}{1 + a^2} = 0\text{, and thus, } d\ln |Q| = \Re\frac{Q'dw}{Q} = \left(\frac{na}{1+a^2} - \frac{1}{a}\right)dx.
\]
If
\[
\frac{na}{1+a^2} - \frac{1}{a} < 0,
\]
which is equivalent to
\[
a^2 < \frac{1}{n-1},
\]
then $\ln |Q|$ grows when $a$ is replaced by $a-\varepsilon$ for a sufficiently small $\varepsilon > 0$. From the maximum principle, $|Q|$ has a value that is strictly greater than $|Q(a)|$ and is attained at some point of the circle $\{|w|=a\}$. But in this case $|P|$ at this point will be strictly greater than $|P(a)|$, contradicting the initial choice of $a$.
	
Hence $a^2\ge \frac{1}{n-1}$. In terms of the angular distances on $S^3$, we need to bound from below the distance from $p_0=(0,1)$ to
\[
p_a=\left(\frac{a}{\sqrt{1+a^2}}, \frac{1}{\sqrt{1+a^2}}\right),
\] 
which equals
\[
\arccos |p_0\cdot p_a| = \arccos \frac{1}{\sqrt{1+a^2}} \ge \arccos \sqrt{\frac{n-1}{n}} = \arcsin \frac{1}{\sqrt{n}}.
\]
\end{proof}

\begin{proof}[Proof of Theorem \ref{theorem:ortega-moreno}]
Assuming the contrary, we find a point $p_0$ such that $P(p_0)=0$ and another point $p_m$, where the maximum of $|P|$ is attained, so that the angular distance between them is strictly smaller than $\arcsin\frac{1}{\sqrt{n}}$. Going to the linear span of $p_0$ and $p_m$, we apply Lemma~\ref{lemma:ortega-moreno} and reach a contradiction.
\end{proof}

\begin{proof}[Proof of Theorem \ref{theorem:ortega-moreno2}]
The proof is similar to the one above and just extends the argument from \cite{ortegamoreno2021}. Consider the product 
\[
P(z) = P_1^{\delta_1^2}\cdots P_N^{\delta_N^2}
\]
and let
\[
D = \sum_{k=1}^N \delta_k^2 \deg P_k \le 1.
\]
Note that $P$ is not well-defined on the unit sphere $S^{2d-1}$ (as the powers of complex numbers are multi-valued), but its absolute value $|P|$ is well-defined and descends to a function on $\mathbb CP^{d-1}$. Let $p$ be a maximum point of $|P|$ on the sphere. Consider a zero $p_0$ of $P_k$ and show that its angular distance from $p$ to $p_0$ is at least $\arcsin\delta_k$.

After restricting to the two-dimensional subspace through $p$ and $p_0$, we may switch from $S^3\subset\mathbb C^2$ to its quotient $\mathbb CP^1$ and an affine coordinate $w=x+iy$ there. Assuming $p_0=0$, we get
\[
|P| = \left|w^{\delta_k^2} Q(w)\right| \left(1 + |w|^2\right)^{-D/2},
\]
where $Q(w)$ is a product of expressions of the form $R_\ell(w)^{\delta_\ell^2}$ with polynomial $R_\ell$. Note that $|Q|$ is thus subharmonic.
Express the differential
\[
d\ln |P| = \delta_k^2 \Re \frac{dw}{w} + \Re\frac{Q'dw}{Q} - D \frac{xdx + ydy}{1 + |w|^2}.
\]
Assume that $p$ corresponds to a positive real $a$ in our $\mathbb CP^1$ affine coordinate and write
\[
\delta_k^2 \frac{dx}{a} + \Re\frac{Q'dw}{Q} - D \frac{adx}{1 + a^2} = 0, \text{ and thus, } d\ln |Q| = \Re\frac{Q'dw}{Q} = \left(\frac{Da}{1+a^2} - \frac{\delta_k^2}{a}\right)dx.
\]
If we have the inequality
\[
\frac{Da}{1+a^2} - \frac{\delta_k^2}{a} < 0,
\]
which is equivalent to
\[
a^2 < \frac{\delta_k^2}{D-\delta_k^2},
\]
then $\ln |Q|$ grows when $a$ is replaced by $a-\varepsilon$ for a sufficiently small $\varepsilon > 0$. This contradicts the choice of $a$, since $|Q|$ is subharmonic and the maximum of $|Q|$ is attained on the boundary of the disc $\{|w|\le a\}$. Hence there must be a point on the boundary $\{|w|= a\}$ such that the value of $|Q|$ at this point  is strictly greater than $|Q(a)|$. Subsequently, the value of $|P|$ at this point is strictly greater than $|P(a)|$ contradicting the initial choice of $p$.

Now we lift the points back to the sphere $S^3\subset\mathbb C^2$. Once we have $a^2\ge \frac{\delta_k^2}{D-\delta_k^2}$, the distance from $p_0=(0,1)$ to
\[
p_a=\left(\frac{a}{\sqrt{1+a^2}}, \frac{1}{\sqrt{1+a^2}}\right),
\] 
is then bounded from below as follows:
\[
\arccos |p_0\cdot p_a| = \arccos \frac{1}{\sqrt{1+a^2}} \ge \arccos \sqrt{\frac{D-\delta_k^2}{D}} = \arcsin \frac{\delta_k}{\sqrt D} \ge \arcsin \delta_k.
\]
\end{proof}

\section{Proofs of the results about the ball}

We start this section with the proof of the easier partial result.

\begin{proof}[Proof of Theorem \ref{theorem:ball-weak2}]
Due to Theorem \ref{theorem:zhao-inequality}, we know that $z=(p,q)$ is at angular distance at least $\frac{\pi}{4n}$ from the intersection of the zero set of $P(x)P(y)$ with $S^{2d-1}$. The Euclidean distance corresponding to $\frac{\pi}{4n}$ is $2\sin \frac{\pi}{8n}\ge \frac{1}{2n}$, due to concavity of $\sin x$ on $[0,\pi/2]$.

Without loss of generality, assume 
\[
|p|\le |q|, \text{ and thus, }|p| \le 1/\sqrt{2}.
\] 
Let us show that $p$ is a suitable point in the unit ball $B^d\subset\mathbb R^d$ for $C=\frac 1 8$. Assume the contrary, that is, there is a point $p_0\in \mathbb R^d$ at distance less than $\frac{1}{8n}$ from $p$ such that $P(p_0)=0$. Lift $p_0$ to a point in $S^{2d-1}$ given by
\[
(p_0, t q),\quad |p_0|^2 + t^2 |q|^2 = 1,
\]
which is equivalent to
\[
1 - t^2 = \frac{|p_0|^2 - 1 + |q|^2}{|q|^2} = \frac{|p_0|^2 - |p|^2}{|q|^2}.
\]
Hence
\[
|1 - t^2| = \frac{\left| |p_0| - |p|\right|\cdot \left| |p_0| + |p| \right|}{|q|^2} \le \frac{1}{2n}, \text{ and thus, } |1-t| \le 1 - \sqrt{1 - \frac{1}{2n}} \le \frac{\sqrt{2} - 1}{2 \sqrt 2n},
\]
with the last inequality following from the convexity of $1 - \sqrt{1 -x}$ on the interval $[0,1/2]$. The point $(p_0, tq)$ is a zero point of $P(x)P(y)$ and its Euclidean distance from $(p,q)$ is estimated as
\[
\sqrt{(p-p_0)^2 + (tq - q)^2} \le \sqrt{\frac{1}{64n^2} + (1-t)^2} \le \sqrt{\frac{1}{64n^2} + \frac{1}{32 n^2}} < \frac{1}{4n},
\]
which contradicts the lower bound of $\frac 1 {2n}$ from the initial assumption.
\end{proof}

Now we move to the proof of Theorem \ref{theorem:ball-strong}. For a reader's convenience, we first outline the structure of the proof:

\begin{itemize}
	\item
	Approximate the ball by a spherical cap of a sphere of large radius.
	
	\item
	Lift the polynomial to the sphere and multiply it by another polynomial. Choose the multiplier so that it is (almost) impossible to have maxima of the absolute value of the product on the sphere outside the original lifted cap.
	
	\item
	Apply Theorems \ref{theorem:zhao-inequality} and \ref{theorem:zhao-equality} and find a maximum point of the absolute value of the product that is sufficiently far from the zero set of the product and, therefore, from the zero set of the original polynomial.
	
	\item
	In order to make our estimate tight, carefully take the limit as the radius of the sphere tends to infinity.
\end{itemize} 

We need some preparations to describe the multiplier $G_n$ in the statement of the theorem. Consider Chebyshev polynomials of the first kind $T_n\in \mathbb R[t]$ satisfying $T_n(t)=\cos (n\arccos t)$ for $|t|\leq 1$. We first show the classical fact asserting that the sequence of the properly stretched Chebyshev polynomials of odd or even degree converges uniformly on compact sets to the sine or cosine functions, respectively.

\begin{claim}
\label{claim:uniform_converge}
The sequence of polynomials $(-1)^{m} T_{2m}(\frac{t}{2m})$ converges uniformly to $\cos t$ on compact sets. Analogously, the sequence of polynomials $(-1)^m T_{2m+1}(\frac{t}{2m+1})$ converges uniformly to $\sin t$ on compact sets.
\end{claim}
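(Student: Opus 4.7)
The plan is to work directly from the defining identity $T_n(y) = \cos(n\arccos y)$, valid for $|y|\le 1$. For any fixed compact set $K\subset\mathbb R$ and any $k$ large enough that $K\subseteq [-2k, 2k]$ (respectively $K\subseteq [-(2k+1), 2k+1]$), the argument $y = x/n$ lies in $[-1,1]$, so this formula applies uniformly on $K$. The strategy is then to rewrite the argument of $\cos$ using $\arccos(u) = \frac{\pi}{2} - \arcsin(u)$, split off the sign $(-1)^k$ via an angle-addition formula, and invoke the standard Taylor expansion of $\arcsin$.

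More concretely, first I would write, for $n$ large,
\[
T_n\!\left(\tfrac{x}{n}\right) = \cos\!\left(n\arccos\tfrac{x}{n}\right) = \cos\!\left(\tfrac{n\pi}{2} - n\arcsin\tfrac{x}{n}\right).
\]
For $n=2k$ this equals $\cos(k\pi - 2k\arcsin(x/(2k))) = (-1)^k \cos(2k\arcsin(x/(2k)))$, while for $n=2k+1$ it equals $\cos(k\pi + \tfrac{\pi}{2} - (2k+1)\arcsin(x/(2k+1))) = (-1)^k \sin((2k+1)\arcsin(x/(2k+1)))$. Thus in either case the stated sign factor $(-1)^k$ is exactly what is needed to make the sign in front of the cosine or sine positive.

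The second step is the uniform limit $n\arcsin(x/n)\to x$ on any compact set $K$. Since $\arcsin u = u + O(u^3)$ near $0$, for $|x|\le M$ and $n$ large we have
\[
\bigl| n\arcsin(x/n) - x \bigr| \le C\,\frac{M^3}{n^2},
\]
which tends to $0$ uniformly in $x\in K$. Continuity of $\cos$ and $\sin$ (and the fact that they are $1$-Lipschitz on $\mathbb R$) then yields the uniform convergence of $\cos(2k\arcsin(x/(2k)))$ to $\cos x$ and of $\sin((2k+1)\arcsin(x/(2k+1)))$ to $\sin x$ on $K$.

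I do not expect a genuine obstacle here: the claim is a routine classical fact, and the only point requiring a bit of care is the parity bookkeeping in step one, which ensures that the correct sign is pulled out so that the limit is the unsigned $\cos x$ or $\sin x$. No deeper tool beyond the definition of $T_n$ and a Taylor estimate for $\arcsin$ is needed.
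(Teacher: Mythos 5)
Your proof is correct and follows essentially the same route as the paper's: rewrite $T_n(x/n)$ as $\cos\bigl(\tfrac{n\pi}{2}-n\arcsin\tfrac{x}{n}\bigr)$, extract the sign $(-1)^k$ by parity, and conclude from the uniform convergence $n\arcsin(x/n)\to x$ on compacta together with the Lipschitz property of $\cos$ and $\sin$. The only difference is that you justify the uniform limit via the Taylor estimate for $\arcsin$, which the paper simply asserts.
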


\begin{proof}
    For any $t$ with $ |t|\leq 2m$, we have
    \[
    (-1)^{m}T_{2m} \left(\frac{t}{2m}\right) 
     = (-1)^{ m } \cos \left( 2m \arccos \frac{t}{2m} \right) =\cos \left(2m\arcsin \frac{t}{2m} \right).
    \]
    Since the sequence $2m \arcsin (\frac{t}{2m})$ converges uniformly to $t$ on compact sets and the function $\cos t $ is $1$-Lipschitz, we obtain the desired uniform convergence to $\cos t$. 
    
    Analogously, one can prove the second uniform convergence.
\end{proof}

Recall that the cosine and sine functions can be represented as infinite products as follows
\[
    \cos t = \prod_{j=1}^{\infty} 
    \left( 1 - \left( \frac {2t} {(2j-1)\pi} \right)^2 \right) 
    \text{ and } 
    \sin t = t\prod_{j=1}^{\infty} 
    \left( 1 - \left( \frac {t} {j\pi} \right)^2 \right).
\]
The Chebyshev polynomial has a similar product representation
\[
    T_{2m}(t)=
    (-1)^m\prod_{j=1}^m \left(1- \left(\frac{t}{
        t_{j,2m}}\right)^2 \right) 
\text{ and }
    T_{2m+1}(t)=(-1)^m (2m+1)t 
    \prod_{j=1}^k \left( 1 - \left( \frac{t}{
    t_{j,2m+1}} \right)^2 \right),
\]
where $t_{1,k}<\dots<t_{\lfloor k/2\rfloor,k}$ are positive zeros of $T_k$, that is,
\begin{equation}
    \label{equation:zeros of Chebyshev}
    t_{j,k}= \sin \varphi_{j,k}, \text{ where }
\varphi_{j,k} = \begin{dcases}
        -\dfrac{\pi}{4m}+\dfrac{j\pi}{2m} & \text{ if } k=2m;\\
        \dfrac{j\pi}{2m+1} & \text{ if } k=2m+1,\\
    \end{dcases}
\end{equation}

Given positive integers $n$ and $k$ of the same parity, $n<k$, we consider the function $g_n:\mathbb R \to \mathbb R$ and the polynomial $g_{n,k}\in \mathbb R[t]$ defined as follows
\begin{equation}
\label{equation:g_n}
    g_n(t) =
    \begin{dcases}
        \prod\limits_{j=\frac{n}{2}+1}^{\infty} \left( 1 - \left( \dfrac {2t} {(2j-1)\pi} \right)^2 \right) & \text{ if } n \text{ is even};\\
        \prod\limits_{j=\frac{n+1}{2}}^{\infty} 
    \left( 1 - \left( \dfrac {t} {j\pi} \right)^2 \right) & \text{ if } n \text{ is odd};
    \end{dcases}
\end{equation}

\begin{equation}
\label{equation:g_nk}
    g_{n,k}(t) =
    \prod_{j=\lfloor n/2 \rfloor +1 }^{\lfloor k/2 \rfloor} \left( 1 - \left( \frac{t}{
    k t_{j,k}} \right)^2 \right).
\end{equation}

By~(\ref{equation:zeros of Chebyshev}), we have $2m t_{j,2m}$ and $(2m+1) t_{j,2m+1}$ converge to $j$ and $(2j-1)/2$, respectively, as $m\to \infty$. Therefore, since the product representations of $g_n$ and $g_{n,k}$ differ from corresponding trigonometric functions and polynomials $(-1)^kT_k(t/k)$, respectively, by the similar multiplicative factor, Claim~\ref{claim:uniform_converge} implies the following.

\begin{claim}
\label{claim:uniform_converge2}
The sequence of polynomials $g_{n,k}(t)$ converges to $g_n(t)$ uniformly on compact sets as $k\to\infty$.
\end{claim}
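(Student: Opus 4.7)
The plan is to exploit the factorizations
\[
(-1)^k T_k(x/k) = P_{n,k}(x)\, g_{n,k}(x) \quad\text{and}\quad \cos x \text{ (or } \sin x) = P_n(x)\, g_n(x),
\]
where $P_{n,k}(x) = \prod_{i=1}^{\lfloor n/2\rfloor}\bigl(1-(x/(k t_{i,k}))^2\bigr)$ and $P_n(x) = \prod_{i=1}^{\lfloor n/2\rfloor}\bigl(1-(x/z_i)^2\bigr)$ with $z_i = (2i-1)\pi/2$ in the even case and $z_i = i\pi$ in the odd case. These are exactly the factors that one pulls off the front of each infinite product to obtain $g_n$ and $g_{n,k}$, and Claim~\ref{claim:uniform_converge} already supplies uniform convergence of the right-hand sides on compact subsets of $\mathbb R$.

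First I would check that $P_{n,k}\to P_n$ uniformly on compacta. Since $P_{n,k}$ is a fixed-degree (namely $\lfloor n/2\rfloor$) product, this reduces to showing $k\,t_{i,k}\to z_i$ for each fixed $i$, which is immediate from \eqref{equation:zeros of Chebyshev} combined with the elementary limit $k\sin(a/k)\to a$. Multiplying, subtracting, and rearranging then gives
\[
P_{n,k}(x)\bigl(g_{n,k}(x)-g_n(x)\bigr) = \bigl[(-1)^k T_k(x/k)-\cos x\bigr] + g_n(x)\bigl[P_n(x)-P_{n,k}(x)\bigr]
\]
(with $\sin$ in place of $\cos$ when $n$ is odd), so the right-hand side tends to $0$ uniformly on any compact $K\subset\mathbb R$.

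The main obstacle is that $P_{n,k}$ becomes arbitrarily small near the zeros $\pm z_i$ of $P_n$, so dividing pointwise does not suffice. I plan to route around this via complex analysis. First, I would upgrade Claim~\ref{claim:uniform_converge} to uniform convergence on compacta of $\mathbb C$: the standard bound $|T_k(w)|\le(|w|+\sqrt{|w|^2+1})^k$ applied at $w=z/k$ shows that $(-1)^k T_k(z/k)$ is uniformly bounded on any disk (it is in fact $O(e^{|z|})$), so Vitali's convergence theorem promotes the real convergence to local uniform convergence on $\mathbb C$. Next, given a compact $K\subset\mathbb R$, I would choose a closed disk $\bar D_R\supset K$ whose boundary circle avoids the finitely many zeros $\pm z_i$ of $P_n$. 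On $\partial D_R$ one then has $|P_n|\ge c>0$ and, for $k$ large, $|P_{n,k}|\ge c/2$, so dividing the displayed identity gives $g_{n,k}-g_n\to 0$ uniformly on $\partial D_R$. Finally, since $g_{n,k}-g_n$ is entire, the maximum modulus principle propagates this convergence to all of $\bar D_R$, and in particular to $K$, completing the proof.
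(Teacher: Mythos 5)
Your proof is correct, and it is built on the same factorization idea as the paper's argument, which is a single sentence: the paper simply observes that $g_n$ and $g_{n,k}$ are obtained from $\cos x$ (resp.\ $\sin x$) and $(-1)^{\lfloor k/2\rfloor}T_k(x/k)$ by dividing out ``the same multiplicative factor'' and declares that Claim~\ref{claim:uniform_converge} finishes the job. You correctly notice that this is not quite a proof: the factors $P_n$ and $P_{n,k}$ are not literally the same, and more importantly they vanish on the compact set in question, so uniform convergence of numerators and denominators does not directly yield uniform convergence of the quotients. Your detour --- upgrading Claim~\ref{claim:uniform_converge} to local uniform convergence on $\mathbb C$ via the bound $|T_k(z/k)|=O(e^{2|z|})$ and Vitali/Montel plus the identity theorem, dividing only on a circle $\partial D_R$ that avoids the zeros of $P_n$, and then pulling the estimate back to the interior by the maximum modulus principle applied to the entire function $g_{n,k}-g_n$ --- is a legitimate and rather elegant repair; an alternative, more elementary route would be to compare the two products factor by factor using $k\,t_{i,k}\gtrsim i$ and a tail-splitting estimate. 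One small slip: in the odd case the common factor pulled off the front must include the leading $x$ from $\sin x = x\prod_{i\ge 1}(1-(x/(i\pi))^2)$ and from $T_{2m+1}$, i.e.\ $P_n(x)=x\prod_{i=1}^{\lfloor n/2\rfloor}(1-(x/(i\pi))^2)$ and similarly for $P_{n,k}$; as written your identity $\sin x=P_n(x)g_n(x)$ is off by that factor. This does not affect the structure of the argument (the extra common zero at $0$ is avoided by the circle $\partial D_R$ just like the $\pm z_i$), but it should be stated.
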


The function $G_n$ from the statement of Theorem \ref{theorem:ball-strong} is the scaled version of $g_n$, that is,
\begin{equation}
\label{equation:bigg}
G_n(t) = g_n\left(\frac{ n \pi t}{2} \right).
\end{equation}
Now we are ready to prove Theorem~\ref{theorem:ball-strong}.

\begin{proof}[Proof of Theorem \ref{theorem:ball-strong}]
For a polynomial $P\in \mathbb R[x_1,\dots, x_d]$ of degree $n$, define the continuous function $Q:\mathbb R^d\to \mathbb R$ and the polynomial $Q_k\in \mathbb R[x_1,\dots, x_d]$ of degree $k>n$, where $k$ has the same parity as $n$, as follows
\[
Q(x)=P(x)g_n\left(\frac{ n \pi |x|}{2} \right) \text{ and } Q_k(x)=P(x)g_{n,k}\left(\frac{ n \pi |x|}{2} \right).
\]
By Claim~\ref{claim:uniform_converge2}, the sequence of the polynomials $Q_k(x)$ converges to $Q(x)$ uniformly on the ball $B^d$ as $k\to\infty$.

Let the polynomial $Q_k(x)$ attain its maximal absolute value in $B^d$ at a point $p_k$. From the uniform convergence, a subsequence of $(p_k)$ converges to a point where $Q(x)$ attains its maximal absolute value in $B^d$. Hence it is sufficient to show that the distance between $p_k$ and the zero set of $P$ is bounded from below by a number converging to $1/n$ as $k\to\infty$.

Put $B^d$ in $\mathbb R^{d+1}$ with additional last coordinate $z$ in the horizontal hyperplane 
\[
    H:=\{(x,z)\in \mathbb R^{d+1}\ |\ z=0\}
\] 
and consider the $d$-dimensional sphere $S_k$ of radius $r_k=\frac{2k}{n\pi}$ centered at the origin of $\mathbb R^{d+1}$. All spheres of this sequence are of dimension $d$, so we do not indicate their dimension further on. Denote by $\spcap{r}$ the spherical cap on $S_k$ of spherical radius $r$ (that is, radius in the intrinsic metric of $S_k$) centered at the north pole $(0,r_k)$.

For the sake of brevity, denote by $Z(Q_k)$ the intersection of the zero set of the polynomial $Q_k(x)$ (independent of $z$) of degree $k$ with the sphere $S_k$. Slightly abusing notation, denote by $Z(g_{n,k})$ the intersection of the zero set of the polynomial $g_{n,k}\left(\frac{n\pi |x|}{2}\right)$ of degree $k-n$ with the sphere $S_k$. Clearly, $Z(g_{n,k})\subset Z(Q_k)$. Let $M(Q_k)$ be the set of points where $Q_k(x)$ attains its maximal absolute value on $S_k$.

\begin{figure}[h]
\centering
\begin{tikzpicture}[scale=2.5]

\definecolor{2}{rgb}{0.47, 0.27, 0.23}
\definecolor{3}{rgb}{0.11, 0.67, 0.84}
\definecolor{4}{rgb}{0.6, 0.8, 0.2}
\definecolor{5}{rgb}{1.0, 0.94, 0.0}
\definecolor{6}{rgb}{1.0, 0.63, 0.54}
\definecolor{7}{rgb}{0.2,1,0.2}
\definecolor{8}{rgb}{0.89, 0.26, 0.2}
\definecolor{9}{rgb}{0.24, 0.82, 0.44}
\definecolor{10}{rgb}{0.28, 0.57, 0.81}

\def \whor {0.75pt}
\def \wver {0.75pt}
\def \kk {10}
\def \nn {3}
\def \epsil {0.08}
\def \k {2*\kk}
\def \n {2*(\nn-1)}
\def \pe {180}
\newcommand{\formula}[1]{(-\pe/(2*\k)+#1*\pe/(\k))}
\def \siz {\tiny}

\draw [line width=0.5pt] (0,0) circle (1);
\node at ({cos(135)},{sin(135)}) [above left] {\siz$S_k$};

\draw [fill=black] (0,1) circle (0.2pt);
\node at (0,1) [above] {\siz$(0,r_k)$};

\draw [line width=0.5pt,domain={-1-2*\epsil}:{1+2*\epsil}] plot(\x,0);
\node at ({1+2*\epsil},0) [right] {\siz$H$};

\draw [line width=1.0pt, domain={-\n*pi/(2*\k)}:{\n*pi/(2*\k)}, color=blue] plot(\x,0);
\node at (0,0) [above] {\siz\textcolor{blue}{$B^d$}};

\foreach \s in {\nn,...,\kk}
{
  \draw [line width=\whor,domain={-sin(\formula{\s})-\epsil}:{sin(\formula{\s})+\epsil}, color={\s}] plot(\x, {cos(\formula{\s})});
  \draw [line width=\whor,domain={-sin(\formula{\s})-\epsil}:{sin(\formula{\s})+\epsil}, color={\s}] plot(\x,{-cos(\formula{\s})});
  \draw [line width=\wver,domain={-cos(\formula{\s})-\epsil}:{cos(\formula{\s})+\epsil}, color={\s},dashed] plot({sin(\formula{\s})},\x);
  \draw [line width=\wver,domain={-cos(\formula{\s})-\epsil}:{cos(\formula{\s})+\epsil}, color={\s},dashed] plot({-sin(\formula{\s})},\x);
}

\draw [fill=2, opacity = 0.2, domain={\pe/2-\formula{\nn}}:{\pe/2+\formula{\nn}}] plot ({cos(\x)},{sin(\x)});
\node at ({sin(\formula{\nn})}, {cos(\formula{\nn})}) [above] {\siz\textcolor{2}{$C_{1+\frac{1}{n}}$}};

\end{tikzpicture}
\caption{The zero set of $g_{n,k}$ is illustrated in dashed lines; the intersection of $S_k$ with horizontal hyperplanes (colored solid lines) is $Z(g_{n,k})$.}
\label{figure:polynomialgnk}
\end{figure}

Now we can show that the set $\Zg$ partitions the sphere $S_k$ into two spherical caps $\pm \spcap{1+\frac{1}{n}}$, with centers at the north and south poles and spherical radius $1+\frac 1 n$, and $k-n-1$ spherical segments of width $\frac{2}{n}$; see Figure~\ref{figure:polynomialgnk}. Indeed, due to (\ref{equation:g_nk}) and (\ref{equation:zeros of Chebyshev}), the set $Z(g_{n,k})$ consists of horizontal $(d-1)$-dimensional spheres defined by the intersection of the sphere $S_k$ with the union of the parallel $ k-n $ horizontal hyperplanes 
\[
    \left\{(x,z)\in \mathbb R^{d+1}\ \middle|\ 
    z= \pm r_k\cos \varphi_{j,k}\right\},
\]
where $ j\in\{\lfloor n/2\rfloor +1,\dots, \lfloor k/2 \rfloor \} $. Using the formula for $\varphi_{j,k}$ in~(\ref{equation:zeros of Chebyshev}), we obtain that any two consecutive $(d-1)$-dimensional spheres on $S_k$ (whose radius is $r_k$) are at the spherical distance $ \frac{\pi}{k}r_k=\frac{2}{n}$ apart. Since these $k-n$ hyperplanes form $k-n-1$ spherical segments on $S_k$, the radius of two remaining caps is $\frac 1 2 (\pi r_k - (k-n-1)\frac 2 n) = 1+\frac 1 n$. 

Let us show that there is a point of the set $\MQ$ lying in the cap $\spcap{1+\frac{1}{n}}$. Indeed, by Theorems~\ref{theorem:zhao-inequality} and \ref{theorem:zhao-equality}, we have two possible options.
\begin{itemize}
    \item[1.] There exists a point $q_k\in \MQ$ at spherical distance strictly larger than $\frac{\pi r_k}{2k}=\frac{1}{n}$ from $\ZQ$.
    \item[2.] There is a great circle $\Sigma$ on $S_k$ containing exactly $2k$ points of $\ZQ$ and exactly $2k$ points of $\MQ$ so that the zeros and the maxima of the absolute value interlace and are equally spaced with spherical distance $\frac{\pi r_k}{2k}=\frac{1}{n}$.
\end{itemize}

In the first case, from the strict inequality on the distance the point $q_k$ cannot lie in any of the spherical segments of width $\frac{2}{n}$ bounded by spheres from $\Zg$. Therefore the point $q_k$ must lie in one of the caps $\pm \spcap{1+\frac{1}{n}}$. Since the polynomial $Q_k(x)$ is independent of $z$, we may apply the transformation $z\mapsto -z$ and assume $q_k\in \spcap{1+\frac{1}{n}}$.

In the second case, since $\Sigma\not\subset \ZQ$, the circle $\Sigma$ must intersect every sphere of $\Zg\subset \ZQ$ at two points. Otherwise, the restriction of $Q_k$ (the product of $P(x)$ of degree $k$ and $g_{n,k}(n \pi |x|/2 )$ of degree $n-k$) to the circle $\Sigma$ would have less that $2k$ roots, which is not the case. Therefore, the intersection $\Sigma\cap \spcap{1+\frac{1}{n}}$ is not empty and must contain at least one point of the set $M(Q_k)$. Denote one of them by $q_k\in M(Q_k) \cap \spcap{1+\frac{1}{n}}$. As in the first case, by Theorem~\ref{theorem:zhao-inequality}, the point $q_k$ lies at spherical distance at least $1/n$ from~$Z(Q_k)$.

In both cases, the point $q_k\in \spcap{1+\frac{1}{n}}$ must be at spherical distance at least $\frac{1}{n}$ from the boundary of $\spcap{1+\frac{1}{n}}$, because the boundary itself is a part of the set $\Zg\subset\ZQ$; see~Figure~\ref{figure:polynomialgnk}. Therefore, $q_k$ lies in the cap $\spcap{1}\subset \spcap{1+\frac{1}{n}}$ of spherical radius $1$ centered at the north pole of $S_k$.

To summarize, we have found a point $q_k\in \spcap{1}\cap \MQ$ at spherical distance at least $\frac{1}{n}$ from the intersection of $S_k$ and the zero set of the polynomial $P(x)$ (not depending on $z$) as this intersection is a subset of $\ZQ$. The projection onto the hyperplane $H$ maps $q_k$ to a point $p_k$ in the ball $B^d$. The image of the spherical cap $\spcap{1}$ under this projection is contained in the ball $B^d$ and tends to the ball $B^d$ as $k\to \infty$. The ratio between the spherical distance in $\spcap{1}$ and the Euclidean distance in its projection to $B^d$ tends to $1$ as $k\to\infty$. Hence the Euclidean distance between $p_k\in B^d$ and the zero set of $P(x)$ tends to $\delta\ge 1/n$. Therefore, we obtain the desired sequence of points $(p_k)$ maximizing the absolute value of $Q_k$.

\end{proof}

\bibliography{../Bib/karasev}
\bibliographystyle{abbrv}
\end{document}